\def\BibTeX{{\rm B\kern-.05em{\sc i\kern-.025em b}\kern-.08em
    T\kern-.1667em\lower.7ex\hbox{E}\kern-.125emX}}
\DeclareMathOperator*{\argmax}{arg\,max}
\DeclareMathOperator*{\maxi}{max}
\definecolor{red}{rgb}{0.882, 0.281, 0.281}
\definecolor{green}{rgb}{0.0, 0.0, 0.0} 
\definecolor{orange}{rgb}{0.875, 0.472, 0.093}
\newcommand{\red}[1]{\textcolor{red}{#1}}
\newcommand{\green}[1]{\textcolor{green}{#1}}
\newcommand{\orange}[1]{\textcolor{orange}{#1}}
\newcommand{\E}[1]{\mathbb E\left[#1\right]}
\newcommand{\PP}[1]{\mathbb P\left[#1\right]}
\theoremstyle{definition}
\theoremstyle{plain}
\newtheorem{theorem}{Theorem}[section]
\newtheorem{corollary}{Corollary}[theorem]
\newtheorem{lemma}[theorem]{Lemma}
\newcommand{\floor}[1]{\lfloor #1 \rfloor}
\begin{document}

\title{A study of first-passage time minimization via Q-learning in heated gridworlds}
\author{M.~A.~Larchenko}
\affiliation{Center for Computational and Data-intensive Science and Engineering, Skolkovo Institute of Science and Technology, 121205, Moscow, Russia}
\author{P.~Osinenko}
\affiliation{Center for Computational and Data-intensive Science and Engineering, Skolkovo Institute of Science and Technology, 121205, Moscow, Russia}
\author{G.~Yaremenko}
\affiliation{Center for Computational and Data-intensive Science and Engineering, Skolkovo Institute of Science and Technology, 121205, Moscow, Russia}
\author{V.~V.~Palyulin}
\affiliation{Center for Computational and Data-intensive Science and Engineering, Skolkovo Institute of Science and Technology, 121205, Moscow, Russia}

\begin{abstract}
Optimization of first-passage times is required in applications ranging from nanobots navigation to market trading. In such settings, one often encounters unevenly distributed noise levels across the environment. We extensively study how a learning agent fares in 1- and 2- dimensional heated gridworlds with an uneven temperature distribution. The results show certain bias effects in agents trained via simple tabular Q-learning, SARSA, Expected SARSA and Double Q-learning. While high learning rate prevents exploration of regions with higher temperature, low enough rate increases the presence of agents in such regions. The discovered peculiarities and biases of temporal-difference-based reinforcement learning methods should be taken into account in real-world physical applications and agent design.
\end{abstract}

\maketitle

\section{Introduction}
\label{sec:introduction}
Machine learning methods have become established and widely used for solving many hard problems such as image or speech recognition \cite{vision1,vision2,speech}, games like Atari \cite{atari} and Go \cite{Go} or vehicle routing problems \cite{NIPS2020VRP,VRPReview}.
The latter few applications demonstrate, in particular, the emerging success of reinforcement learning (RL) approaches. Along with other machine learning approaches \cite{LenkaReview2019} the RL techniques find ever more applications in physics, especially for optimization of motion in complex physics environments \cite{carleo2019review,cichos2020review,hydrodynamics}.
However, tuning of RL agents is a non-trivial task and unexpected effects, such as biases \cite{hasselt2010double,sutton2018}, may occur in their deployment.

One area where RL is a major candidate for the development of autonomous navigation is active matter research \cite{cichos2020review}.
Active particles or agents are objects with an ability to control some of their dynamics and, thus, are a natural sandbox for RL algorithms.
A lion's share of the relevant work in active matter deals with small scales, where thermal fluctuations along with Brownian motion and turbulence play a crucial role \cite{modelsactivematter}.
They have to be taken into account while learning and optimizing control strategies.
RL was also applied for the navigation of microswimmers in such highly stochastic environments as complex and turbulent flows in \cite{biferale2019zermelo, colabrese2017flow,biferale2018PRLiquids,collectiveswimming2018,muinos2021reinforcement}.
The actor-critic RL significantly outperformed a trivial policy of finding the fastest path from A to B for an agent with a constant slip speed in 2D turbulent environment \cite{biferale2019zermelo}.
RL agents in 3D stationary Arnold-Beltrami-Childress helical flow learned to target specific high vorticity regions \cite{biferale2018PRLiquids}.
Among tabular RL methods, Q-learning is perhaps one of the most convenient.
It was used, e.~g., to control self-thermophoretic active particles in a solution with the real-time microscopy system\cite{muinos2021reinforcement}. The Q table corresponded to the discretized position of the microswimmer, thus, staging the gridworld geometry in experiment.
Munios et al. \cite{muinos2021reinforcement} noted that the noise due to Brownian motion substantially affects both the learning process and the actions within the learned behavior. 

Navigation and prediction of motion in highly stochastic or turbulent environments is a necessity not only for nanobots \cite{cichos2020review,KUKREJA2021,muinos2021reinforcement}, but even for large macroworld objects such as marine vehicles in ocean currents \cite{ocean2016}. The macroscopic movement optimization in turbulent media with RL was performed with gliders in turbulent air flows \cite{reddy2016learning,reddy2018glider}. The results clearly show that the efficiency of control decreases with an increasing speed of the glider which is equivalent to increased fluctuations. Still the learned soaring strategy was effective even in the case of strong fluctuations. Other relevant studies include Q-learning for the optimization of collective motion in stochastic environments with small UAVs learning how to flock (``Q-flocking") \cite{hung2016q}, deep RL for coordinated collective energy-saving swimming \cite{collectiveswimming2018}  and navigation with obstacle avoidance in a system with thermal fluctuations by using deep double reinforcement learning \cite{yang2020efficient}. The huge impact of stochastic dynamics, however, is not an exclusive speciality of physical systems. It underlies the base of the modern economy through stock price fluctuations on financial markets, where RL is expanding its presence as a trading algorithm \cite{financial,PENDHARKAR20181}.

\textbf{Randomness in RL}.
Generally, theory of RL and Markov decision processes as well as other control strategies employ noise as a part of the problem setting. 
Talking about applications, in the literature one could identify several directions concerning the impact of stochasticity:

\begin{itemize}
    \item \textit{Noisy reward signal.} Many problems such as games of chance have this noise as a feature \cite{hasselt2010double,hu2018accurate,he2019interleaved,}. Alternatively it comes from an imperfect observation process \cite{perturbed2020,everitt2017corruptedreward}.
    In human-guided learning, for instance, it arises from mistakes and incoherent answers of human teachers \cite{loftin2014humanfeedback}. 
    \item \textit{Measurement noise.} Any distortion or thermal fluctuations in actuators and sensors can affect both perceived data and agent's actions \cite{schoettler2020insertiontask, gullapalli1994peginhole, johannink2019residual, howell1997vehiclesuspension}.
    \item \textit{Noisy adversarial attacks.} Sometimes security problems appear during training process and attacks against pre-trained agents \cite{Huang2017AdversarialAO,Huang2019DeceptiveRL,Gleave2020AdversarialPA}.
    \item \textit{Learning in stochastic dynamics.} The transition between states is affected by a random force and the environment dynamics is assumed to be stochastic \cite{baird1994reinforcement, fox2015taming}.
\end{itemize}

Depending on the problem, different adjustments to a learning process or algorithms were suggested. 
For instance, Double Q-learning \cite{hasselt2010double} is a modification of Q-learning \cite{Qlearning} with a double estimator that counters maximization bias and demonstrates superiority in tasks with noisy reward signal. 
However, there are no examples with stochastic transition dynamics in the original paper. 
Impact of stochastic transition function was discussed in \cite{fox2015taming}, where G-learning was proposed and tested against both Q- and Double Q-learning.
Advantage updating presented in \cite{baird1994reinforcement} was compared with Q-learning in linear quadratic regulator problem with presence of noise in state transition function.

However, to the best of our knowledge a comparison of different algorithms with respect to learning in presence of stochastic dynamics of states was not carried out yet carefully. Thus, it is unclear what kind of adjustments could be useful in such settings.

\textbf{First-passage problem}.
Over the past couple of decades it became clear that many of search and optimization problems in physics, biology and finance can be formulated within a first-passage framework \cite{redner01,Grebenkov_2020}. The first-passage time (FPT) $\tau$ is a first moment when an agent/process with a coordinate/value $X_t$ reaches a boundary $x$, i.~e. $\tau= \mathrm{inf}\{t > 0 :X_t\ge x\}$.
This boundary could be, for instance, a threshold to sell or buy at a stock market or a location of reward in space or a site which, once reached, triggers a biological/chemical process in a living cell. Alternatively, one could reformulate this approach in terms of survival times and probabilities. The FPT optimization is often done by analytical or numerical minimization within model's assumptions as in many of the ecological \cite{viswanathan}, biological \cite{Mirny_2009} problems or risk assessment \cite{risk}. First-passage times
have been found to be connected to the relative advantage of states in Markov decision processes (MDP) \cite{stoshasticgames} and have proven to be useful for characterization of reachability of states \cite{reachability}. The passage time itself could be used as a reward function for an algorithm to minimize. It is interesting that in more traditional thermal bath settings the minimization of FPT could produce non-trivial results such as complex shapes of potentials needed for minimization observed in theory and experiment \cite{Palyulin_2012,Chupeau1383}. 

In this paper we find that uneven noise distributions can trigger biases of RL learning algorithms and as such have to be paid attention to. We introduce a new type of gridworld models with a state-dependent noise affecting actions which we call as heated gridworlds. We perform an extensive study of agent learning in heated gridworlds with state-dependent temperature distribution. We find that the state-dependency of the noise triggers convergence of agents to suboptimal solutions, around which the respective policies stay for practically long learning times. This happens with such common RL algorithms as tabular Q-learning, SARSA, Expected SARSA and Double Q-learning. The observed phenomena should be taken into account in design and deployment of agents in physical applications that follow the formalism of a heated gridworld.

\textbf{Notation}. Capital letters will denote random variables, if not specified otherwise.
Small letters will denote definite values thereof.
For instance, if $R_t$ is a reward at time $t$ as a random variable, then $r_t$ is a value that it assumed.

\section{Background}
\label{sec:background}

The general scheme of RL consists of an agent and an environment. The agent interacts with an environment in a cycle by doing actions and receiving rewards \cite{sutton2018,bertsekas1996neuro}.
RL problem is usually described with the framework of Markov Decision Processes (MDP).
At a time frame $t$, the agent perceives an environment through a random state $S_t \in \mathcal S$, then selects an action $A_t \in \mathcal A$ distributed with a probability distribution $\pi$, called the \textit{policy}, and gets some $\mathcal R \subseteq \mathbb R$-valued random reward $R_t$.
The sets $\mathcal S, \mathcal A, \mathcal R$ are assumed to be finite. The environment transitions into the next state adopt a value $s_{t+1}$ with probability $P_S(s_{t+1}| s_t, a_t) = \PP{S_{t+1}=s_{t+1}| S_t=s_t, A_t=a_t}$, where $s_t, a_t$ are the current state and action values, respectively. $a_t$ is sampled from the probability distribution $\pi$. The policy is assumed to be Markovian $\pi(a_t) = \pi(a_t|s_t)$. The total expected discounted reward under a policy $\pi$ starting at a state $s$ is denoted as

\begin{equation}
    \label{eqn:reward-to-go}
    V^\pi(s) = \mathbb{E}_\pi\Big[ \sum_{t} \gamma^t r_{t} \Big|S = s \Big],
\end{equation}
where $\gamma \in (0, 1)$ is a discounting factor.

Another handy formalism is of action-value functions. If an agent takes an action $a$ in a state $s$ and follows $\pi$ thereafter one can define
\begin{equation}
    \label{eqn:Q-pi}
    Q^\pi(s, a) = \mathbb{E}_\pi\Big[ \sum_{t} \gamma^t r_{t} \Big|S = s, A = a\Big].
\end{equation}  

The agent's goal is to find the optimal policy $\pi^\star$ that maximizes the discounted sum of received rewards
\begin{align*}
    Q^\star(s, a) = \max_\pi Q^\pi(s, a), \\
    \pi^\star(s) = \argmax_a Q^\star(s, a).
\end{align*}

\paragraph{First-passage time minimization and MDP}

If the task is in finding the fastest way to a target, one can tie the reward signal to the time needed to reach the target.
The value function could then be made proportional to the mean first-passage time (MFPT). Hence, the optimization of the policy is equivalent to minimization of the MFPT.
Technically speaking, one can write the following dynamical system,
\begin{equation}
\label{eqn:gridworld-env}
  \begin{array}{l}
        \tilde{S}_{t + 1} = \tilde{S}_t + \pi(\tilde{S}_t) + W_t, \\
        S_{t + 1} = \tilde{S}_{t + 1}, S_t, \tilde{S}_{t + 1} \in \mathcal S^\dagger \subseteq \mathcal S \subset \mathbb Z^n, \\
        S_{t + 1} = s_*, \text{ otherwise}, \\
        W_t \sim \mathcal W(\tilde{S}_t), \\
        S_0 = \tilde{s}_0 \in \mathbb{Z}^n,
    \end{array}
\end{equation}
where $\mathcal S^\dagger$ represents the gridworld, and $s_*$ is an artificial formal state value which indicates the ``end of the game'' (the episode is considered finished if the environment state enters this value), $W_t$ represents the temperature effect as a random disturbance with a discrete probability distribution $\mathcal W(\theta)$ having a finite support as a bounded subset of $\mathbb{Z}^n$ and with parameters $\theta$ which are random. The random state $\tilde{S}_t$ and the policy $\pi$ take values from a bounded subset of $\mathbb{Z}^n$. $\pi$ is a Markov policy.
That is, at each time frame $t$, the agent takes some finite number of steps in each direction on $n$-dimensional grid.
Then its actions are perturbed by temperature effects $W_t$ that shift the agent randomly by a finite number of steps.
In general, the respective probability distribution depends on the current state.
When the agent crosses the boundary and thus leaves the gridworld $\tilde{S}_t$, we formally ``fix'' the state at an abstract value $s_*$.

An equivalent description of the above gridworld reads:
\begin{equation}
    \label{eqn:gridworld-env-alt}
    \begin{array}{ll}
        S_{t + 1} = S_t + \pi(S_t) + W_t, & S_t \in \mathcal S^\dagger \land \\
        & S_t + \pi(S_t) + W_t \rvert \in \mathcal S^\dagger\\
        S_{t + 1} = s_*, & \text{otherwise}.
    \end{array}
\end{equation}

Formally, $S$ and $\tilde{S}$ are states of two independent dynamical systems that can each be considered on their own.
The dynamical system corresponding to $S$ can be understood as ``absorbing'', whereas the one with $\tilde{S}$ as ``free''.
Essentially, the trajectories of the two systems coincide up until the first passage beyond $\mathcal S^\dagger$.

We can formulate the reward function for the considered first passage problem as follows:
\begin{equation}
    \label{eqn:reward-gridworld}
    r(s_t, \pi(s_t), w_t) = \mathbb I_{\{s_*\}}(s_{t + 1})\mathbb I_{\mathcal S^\dagger}(s_t)r_{\text{target}} - \mathbb I_{\mathcal S^\dagger}(s_t),
\end{equation}
where $\mathbb I$ is the indicator function, $r_{\text{target}} \in \mathbb R$ is any number used purely for scoring, i.e. when the agent is still inside $\mathcal S^\dagger$, it gets a minus one point. When it crosses the boundary, it receives some $r_{\text{target}}$ points, which may conveniently be set to a large number compared to the grid size. The goal is to score as many points as possible, i.e. to cross the boundary as fast as possible.
Let $R_{\text{tot}}$ be the random variable of total reward and let $J$ be the objective function as the expected total reward,
\begin{equation}
    \label{eqn:objective-fnc}
    J[\pi] = \E{\sum_{t = 0}^\infty r(S_t, \pi(S_t), W_t)} = \E{R_{\text{tot}}}.
\end{equation}
The next subsection describes particular algorithms tested for training boundary-crossing agents in this work, whereas a more detailed theoretical analysis of the gridworld is provided in the Appendix \ref{theory} where it is shown that the respective objective is well-defined for any admissible policy.

\paragraph{Temporal-difference algorithms}

When the transition probability function $P_S$ is known and can be analytically expressed, a solution to RL problem can be obtained from Bellman equation \cite{bellman1957dynamic} or Hamilton–Jacobi–Bellman equation.
Often it is not the case and agents have to learn through interaction, building an estimation of the value function on-line. 
Temporal-difference TD(0) algorithms (here we use Q-learning, SARSA, Expected SARSA and Double Q-learning) employ this idea, renewing the estimation $Q(s, a)$ after every time frame. 
For all listed algorithms $Q(s, a)$ is stored in Q table. Update rules for Q table for all used methods are given in Algorithms \ref{algo:q-learning}, \ref{algo:sarsa}, \ref{algo:expected_sarsa} and \ref{algo:double_learning} (see Appendix A).

\paragraph{Choice of learning rate}
The update rule of Q-learning is governed by a learning rate $\alpha$ and a discount rate $\gamma$ as follows:
\begin{align*}
& Q(s_t, a_t) := \\ 
& Q(s_t, a_t) + \alpha\big(r_{t} +\gamma \maxi_a Q(s_{t+1},a) - Q(s_t, a_t)\big).
\end{align*}
where $r_{t} +\gamma \maxi_a Q(s_{t+1},a)$ is the update target, in other words, let us define:
\begin{align*}
 \text{Update}_t := r_{t} +\gamma \maxi_a Q(s_{t+1}, a).
\end{align*}
There is one more tuning parameter called exploration-exploitation parameter $\varepsilon$, which is not used in update rule directly but affects the exploratory behaviour and is required for a proper convergence of the algorithm (see the algorithm and $\varepsilon$-greedy policy description in Appendix \ref{algorithms}).

As it was shown in \cite{watkins1992q}, Q-learning converges to $Q^\star(s, a)$ and the optimal policy if each state-action pair is visited infinitely many times ($\varepsilon$-greedy policy) and the learning rate satisfies the conditions
\begin{align*}
    \sum_t \alpha_t = \infty, \ \ \ \ \ \ \ \  \sum_t \alpha_t^2 < \infty.
\end{align*}

However, in practice both constant \cite{NIPS2020VRP, reddy2016learning, hasselt2010double, baird1994reinforcement} and scheduled \cite{yang2020efficient} learning rates are used as well.
The value $Q(s, a)$ for some fixed $(s, a)$ pair is renewed in a cycle \cite{sutton2018}
\begin{align*}
    Q_{n+1} &= Q_n + \alpha\big(\text{Update}_n - Q_n\big) \\
            &=(1-\alpha)^n Q_{1} + \alpha\sum_{i=1}^{n}(1-\alpha)^{n-i}\text{Update}_{i}.
\end{align*}
With a constant rate the weight of a single update in total sum decreases exponentially with the number of updates $n$. The higher $\alpha$ is, the sooner an agent overwrites its previous experience. We expect that in stochastic dynamics there could be a restriction on $\alpha$ under which convergence to certain policies is possible.

\section{Simulation framework}


\begin{figure}[H]
\centering
 \begin{subfigure}{\linewidth}
 \centering
  \includegraphics[width=\linewidth]{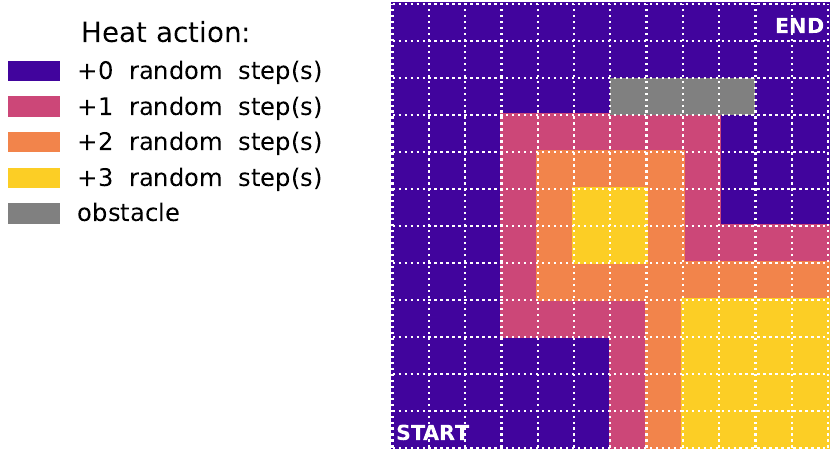} 
  \caption{}
  \label{fig:heated_gridworld_example}
 \end{subfigure}
 \begin{subfigure}{0.5\linewidth}
 \centering
  \includegraphics[width=\linewidth]{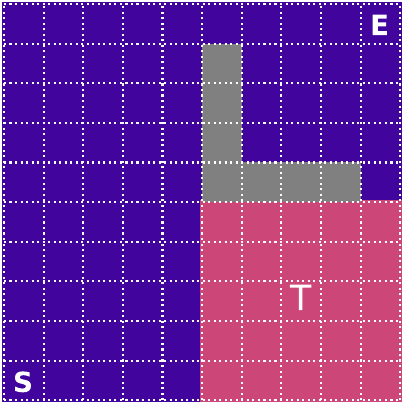}
  \caption{}
  \label{fig:heated_gridworld}
 \end{subfigure}
 \caption{(a) A heated gridworld example (see the description in the main text).
 (b)  10x10 heated gridworld considered in simulations of Section \ref{sec:2d-sim}.
 The heated region (with temperature) is colored in magenta.
 The considered temperature values are $T=0,1,2,3.$}
\label{fig:2D_case}
\end{figure}

%
%

We introduce a new type of gridworlds which we call as heated gridworlds in order to test and to compare work of algorithms in the case of uneven, possibly state-dependent, distributed noise. The noise could be caused, for instance, by thermal fluctuations. As an example we sketch a 2-dimensional heated gridworld in Fig. \ref{fig:2D_case}. It is based on the common gridworld setup with 4 actions (left, right, up, down).
Every played time frame $t$ is penalized with $r_{tick}$ and when the goal position is reached the reward is $r_{target}$. 
Thus, $r_{target}$ is used rather for convenience as discussed in Section \ref{sec:background}.
Attempts to cross the boundaries or obstacles lead to void moves (reflective boundary).
In our particular setting an agent starts in the bottom left corner and aims to learn the fastest way to reach the upper right corner.
For the states $s$ shown as blue squares in Fig.~\ref{fig:2D_case}a, the action proceeds according to the selected policy $\pi$. 
In the heated states (the magenta, orange and yellow squares in Fig.~\ref{fig:2D_case}a) the temperature (noise) affects the outcomes. 
Random offsets described by $W_t$ are added to the action selected from the policy $\pi$ for the state $s$. 
The actual effect of $W_t$ depends on the dimension of the gridworld and on a parameter $T$, called \textit{temperature} of the state.

The motion in the studied heated gridworld follows a procedure that reads, for every time frame $t$:
\begin{enumerate}
  \item Compute an action $a_t$ using $\pi(s_t)$, set $\text{actions}=[a_t]$.
  \item For the position $s_t \in \mathbb Z^2$ take corresponding temperature $T_t$, sample $T_t$ values $w_{ti}$ and append them to the action list to yield the effective action list $[a_t, w_{t1}, ..., w_{tT}]$ .
  \item Update the state sequentially applying moves from the effective action list.
  Ordering of actions in this list matters due to possible interactions with obstacles and boundaries.
\end{enumerate}

An example of final position scattering is given in Fig. \ref{fig:heated_scatter}.
For all numerical experiments shown below, the tuning parameters were set as $\varepsilon$ = 0.1, $\gamma$ = 0.9.

\begin{figure}[H]
\centering
\includegraphics[width=\linewidth]{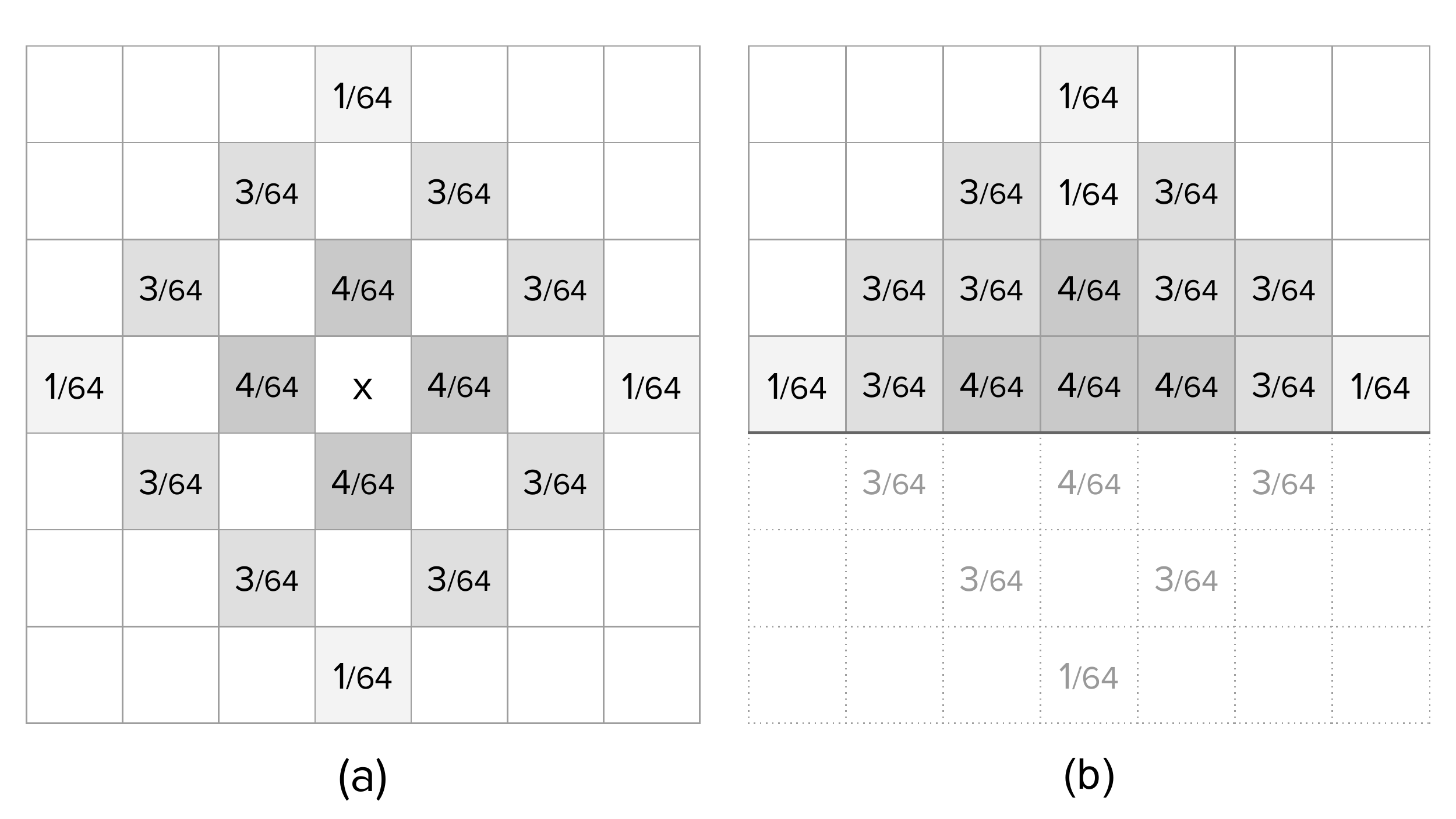}
\caption{Probability to finish in different positions for $T=3$ due to noise scattering, 'x' marks the starting position.
Left, (a): unrestricted; right, (b): near the world’s borders.}
\label{fig:heated_scatter}
\end{figure}


\section{2D simulations}
\label{sec:2d-sim}

In this section, we consider an agent on a 10 $\times$ 10 grid shown in Fig. \ref{fig:2D_case} (b), $r_{tick}$ = -1 and $r_{target}$ = 100.
The heated region is placed in the lower right quarter of the grid. Temperature $T$ of this region is constant throughout the learning episode and is varied between the episodes from $T=0$ (symmetric setup) to $T=3$. The symmetric L-shaped obstacle leaves only two possible ways to reach the end tile from the start, either through deterministic part of medium or through the heated region.
The learning rate $\alpha$ is varied in the range [0.07, 0.09, 0.1, 0.2, 0.3, 0.4, ..., 0.9]. 
The quantity we aim to optimize is the mean first-passage time.

We mark agents as \textit{failed} if their time scores are higher than 500 timesteps cutoff, which is the case for trajectories closed in a loop for a long time. 
Regardless of $T$, this setup always has one option of a path 18 steps long with MFPT = 18 and zero standard deviation.
In the absence of temperature fluctuations ($T=0$), Q-learning converges to this optimum readily in 20K time steps for $\alpha>0.09$.
Once we introduce the noise, $T>0$, the number of failed agents changes (see Fig. \ref{fig:failed_agents}): it increases for high $\alpha\sim 0.8$ while for low $\alpha$ = 0.07 this number drops.

\begin{figure}[H]
\centering
\includegraphics{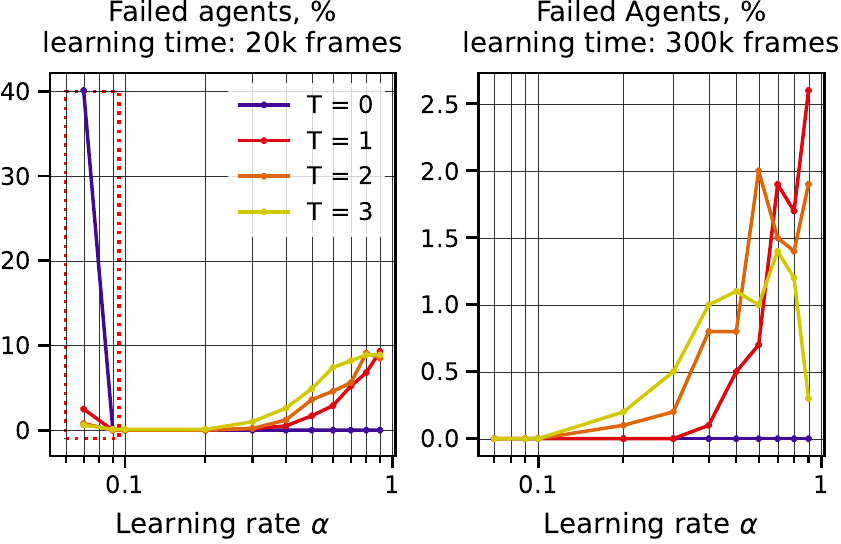}
\caption{Percent of agents failed to finish the game in 500 time frames for $10\times 10$ heated gridworld. For Q-learning and 1000 agents performance is measured for greedy behavior ($\varepsilon$ = 0) after learning time 20K and 300K time frames.
The increase in temperature drastically improves this statistics for $\alpha = 0.07$ (red dotted frame).}
\label{fig:failed_agents}
\end{figure}

\vspace{-2mm}

In the following we do not count the failed agents and only consider what happens with successful ones. The successful agents basically choose between two routes, the deterministic and through the heated region, depending on their learning rate, Fig. \ref{fig:heated_agents}. For $\alpha \sim 0.09$ one observes nearly 100\% convergence to the heated route, while for $\alpha > 0.4$ the majority selects the deterministic path after 300K played iterations. For higher $\alpha$ values the transit to the deterministic route occurs in shorter time. Scheduling of $\alpha$ that decreases its value with learning time thus leads to the population of agents staying in the heated area. Importantly, these changes occur for all tested algorithms similarly, as the path density plots show in Fig. 6. Only SARSA stands out being notably unstable. Remarkably, our findings do not show any advantage of double estimator in this task (see the comparison in Appendix \ref{additional}). In Fig. 5 one can clearly see that the presence of thermal noise increases the mean first-passage time for all $\alpha$, even for $\alpha$ values for which agents seem to operate well in heated areas. Higher learning rates produce a worse score with a bigger deviation after short learning (Fig. \ref{fig:2D_mfpt}, top row).
However, after learning for longer, they achieve a much better performance (Fig. \ref{fig:2D_mfpt}, bottom row) due to switching to another route.


\begin{figure}[h]
    \centering
    \includegraphics{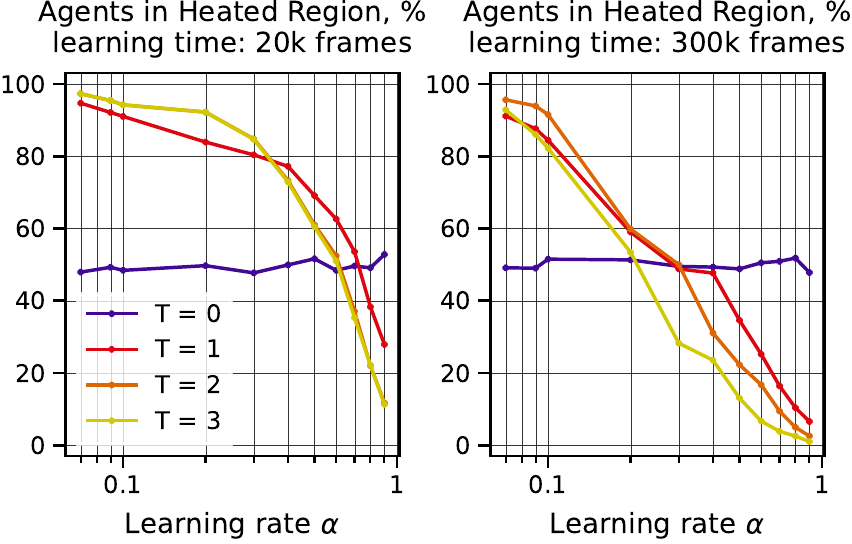}
    \caption{Percent of agents going through heated region for environments with different temperatures $T$ for $10\times 10$ heated 2D gridworld. Q-learning, 1000 agents, performance is measured for greedy behavior ($\varepsilon$ = 0.0) after learning time 20K and 300K time steps.}
    \label{fig:heated_agents}
\end{figure}

In our simulations the observed effect does not depend on the heated region location (Fig. \ref{fig:heated_locations}), the presence of obstacles and the value of discounting parameter $\gamma$. 
When several heated regions with different temperatures $T$ are placed in the gridworld, agents are divided between them proportionally to region temperature (Appendix \ref{additional}). The particular geometry with L-shaped wall allows us to demonstrate the effect quantitatively.

\begin{figure}[h]
\centering
 \begin{subfigure}{\linewidth}
 \centering
  \includegraphics[width=0.8\linewidth]{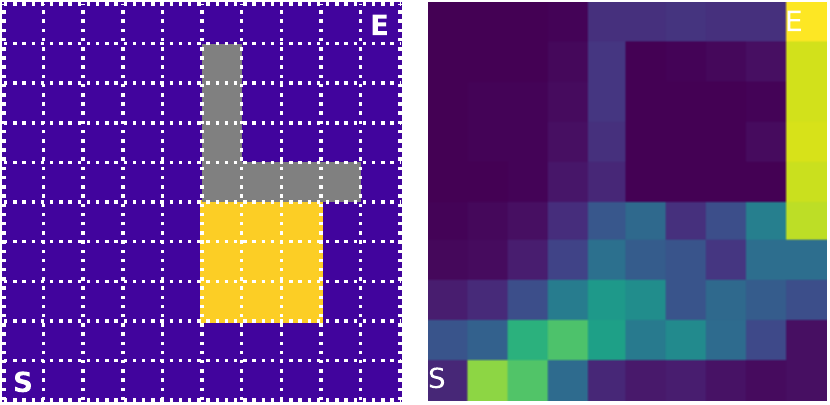}
  \label{fig:heatedgridworld_location1}
 \end{subfigure}
 \begin{subfigure}{\linewidth}
 \centering
  \includegraphics[width=0.8\linewidth]{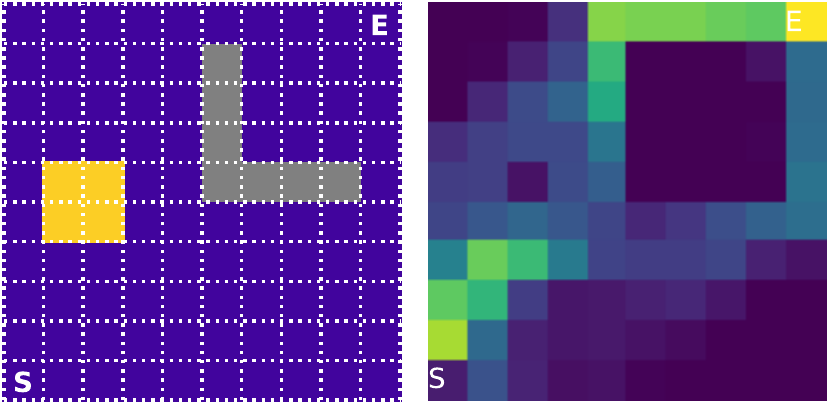}
  \label{fig:heatedgridworld_location2}
 \end{subfigure}
 \caption{Different locations of heated area. 100 agents, $\alpha = 0.1$, learning time is 50K time frames.}
\label{fig:heated_locations}
\end{figure}

\vspace{-30mm}

Our explanation of the effect is that the environmental noise boosts the exploratory behavior of an agent in some parts of the state space, therefore the policy tends to converge to regions with high temperature.

We found that transition to deterministic route when $T=3$ and $\alpha=0.1$ happens in 5M frames or 250K played episodes. Setting epsilon to 1.0 during the whole course of learning forms similar policy in 50K iterations which is equivalent to 100 played episodes (see Appendix \ref{additional}).

\vspace{10mm}

\begin{figure*}
 \begin{subfigure}{\linewidth}
  \includegraphics[width=0.99\linewidth]{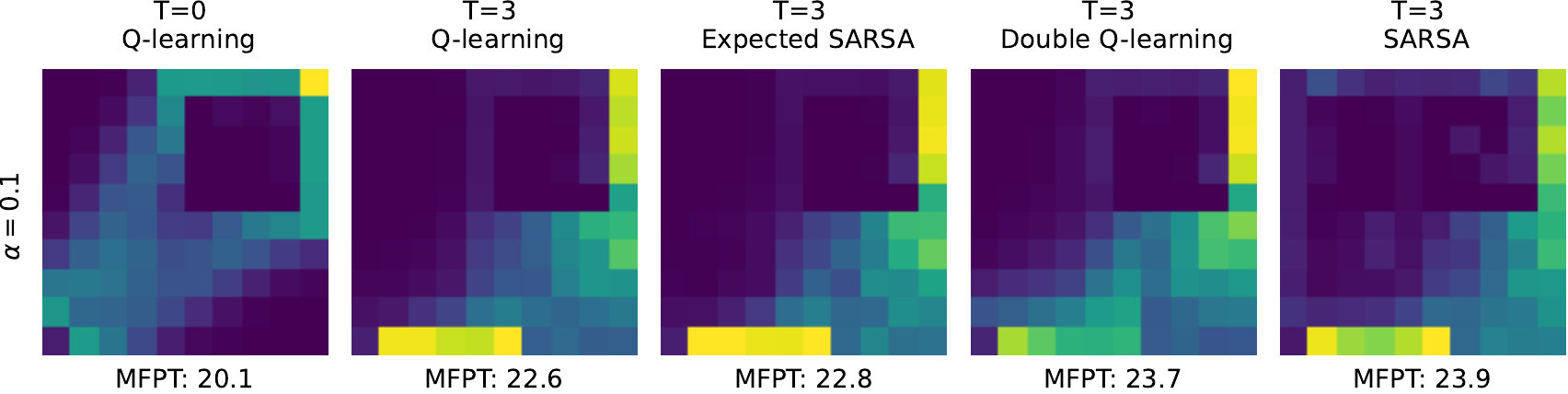}
  \label{fig:path_density_01}
 \end{subfigure}
 \begin{subfigure}{\linewidth}
  \includegraphics[width=0.99\linewidth]{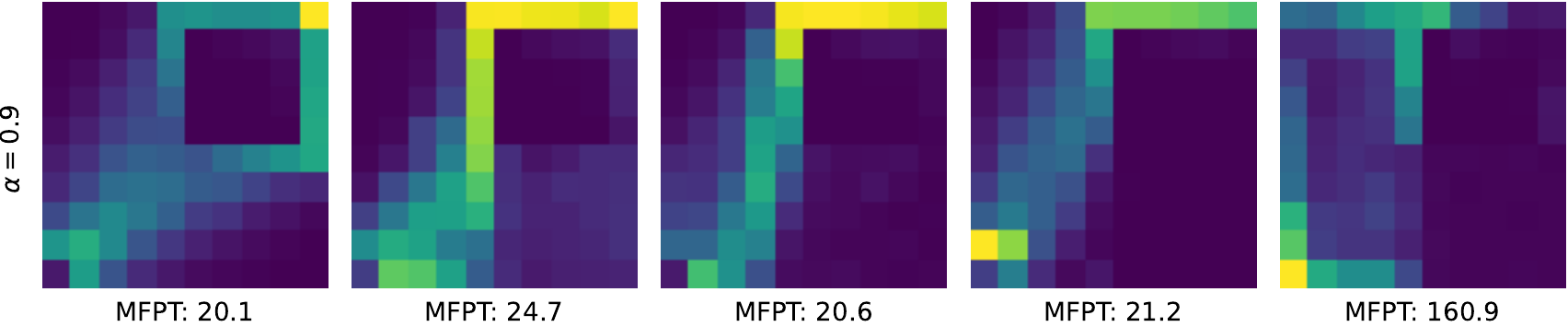}
  \label{fig:path_density_09}
 \end{subfigure}
 \caption{Path density changes due to the presence of a heated region for different learning algorithms. 500 agents. Learning time is 50K time frames in each case. Top row: $\alpha=0.1$, bottom row: $\alpha=0.9$.}
 \label{fig:path_density}
\end{figure*}

\begin{figure*}
\begin{subfigure}{\linewidth}
  \includegraphics[width=\linewidth]{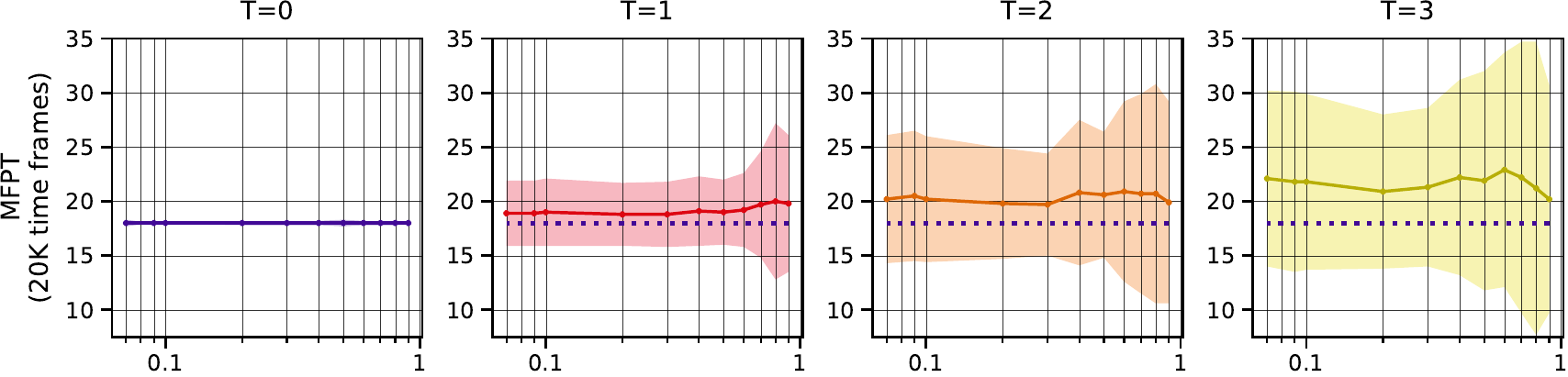} 
 \end{subfigure}
 \begin{subfigure}{\linewidth}
  \includegraphics[width=\linewidth]{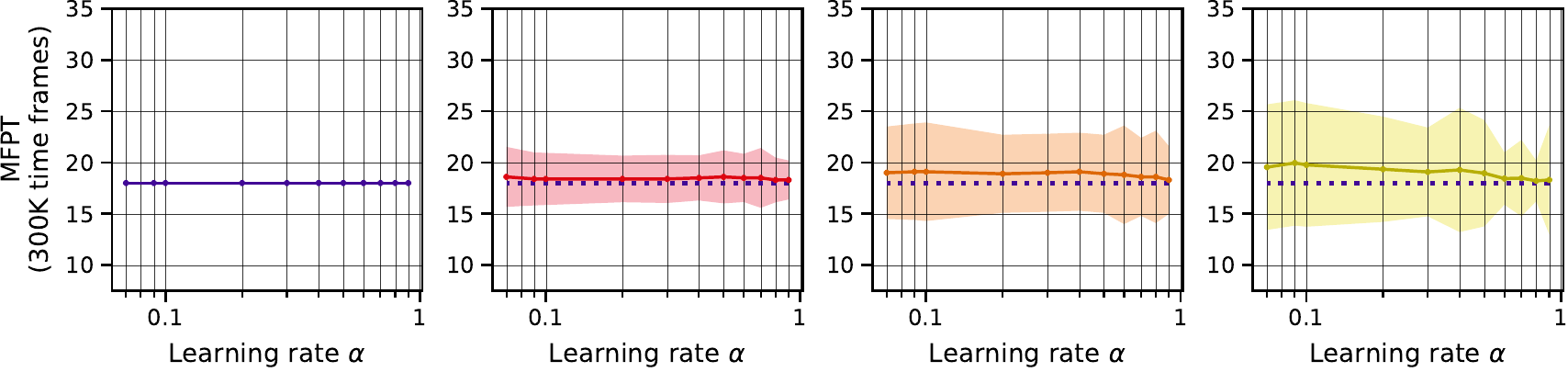}
  \end{subfigure}
 \caption{Mean first-passage time for environments with different $T$. Q-learning, 1000 agents, performance was measured for greedy behavior $\varepsilon$ = 0. Dashed line is baseline (MFPT at $T=0$). Top row: learning time = 20K time frames, bottom row: learning time = 300K time frames. The colored clouds depict one standard deviation of FPT distribution which comes from stochastic nature of the problem and depends on $T$ (not to be confused with measurement error).}
 \label{fig:2D_mfpt}
\end{figure*}

\section{1D simulations}
\label{sec:1d-sim}

\subsection{Interval with absorbing boundaries}

The 2D case shown in the previous section provided a fairly illustrative picture. However, the 1D case is easier to analyze and comprehend. We mainly studied a 1D gridworld where the agent starts in the center $x_0$ of an interval consisted of 41 states. Only two actions are available for the agent, namely, the moves to the left and to the right.
The reward comprises $-1$ for each time frame until the boundary is crossed, and $r_{\text{target}}$ is set to zero. The whole right-hand side of the gridworld $x > x_0$ has the temperature $T$.
1D case does not have interaction with borders and despite its simplicity still allows to produce a bias, as will be seen below.

We introduce the notations for policies shown in Fig. \ref{fig:1D_case}.
By $\pi_{L}$($\pi_R$) we denote the policy that admits a left (right) step, starting from the middle.
Analogously, to indicate a policy bias of two steps, we use a notation like $\pi_{RR}$, which means that from two states to the left of the middle the most common learned policy is to step right. Further policies are denoted following this token.

\begin{figure}[H]
  \centering
  \includegraphics[width=0.8\linewidth]{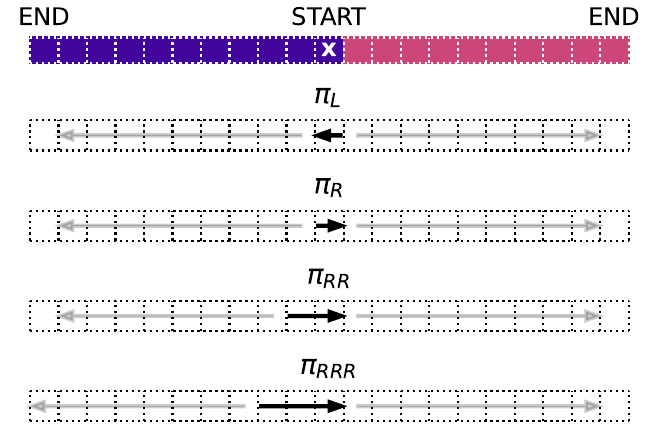}
  \caption{1D environment example and policies notation}
  \label{fig:1D_case}
\end{figure}

In simulations of this section the learning rate $\alpha =0.1$ was fixed. 
Overall, impact of $\alpha$ in 1D case is exactly the same to the one described in the previous section. High values $\alpha\sim0.5$ prevent algorithms from operating in stochastic media. 
It should be noted that notions of ``high'' and ``low'' $\alpha$ are relative and depend on average fluctuation scale in an environment. For instance, an interval with 5 states and $T=1$ has move length fluctuations which are 30\% of optimal path length. The low rate is then 0.01 and the high is 0.1. 

Policies of interest were tested by $10^6$ Monte Carlo (MC) runs. The best found in this way policy is denoted as $\pi^\star$, whereas $\pi_Q$ represents the most common policy of $10^4$ Q-learning agents. It was calculated by taken the most common action among the population of learning agents for every cell on the interval.

The difference between the best found policy $\pi^\star$ and the real agent's performance was observed at $\sim 1.5-2$ \% on average for all considered temperature levels, as Figure \ref{fig:1D_mfpt} shows. In contrast with 2D problem there is a little improvement in agents' scores as learning time passes.

\begin{figure}[H]
  \centering
  \includegraphics[width=1.0\linewidth]{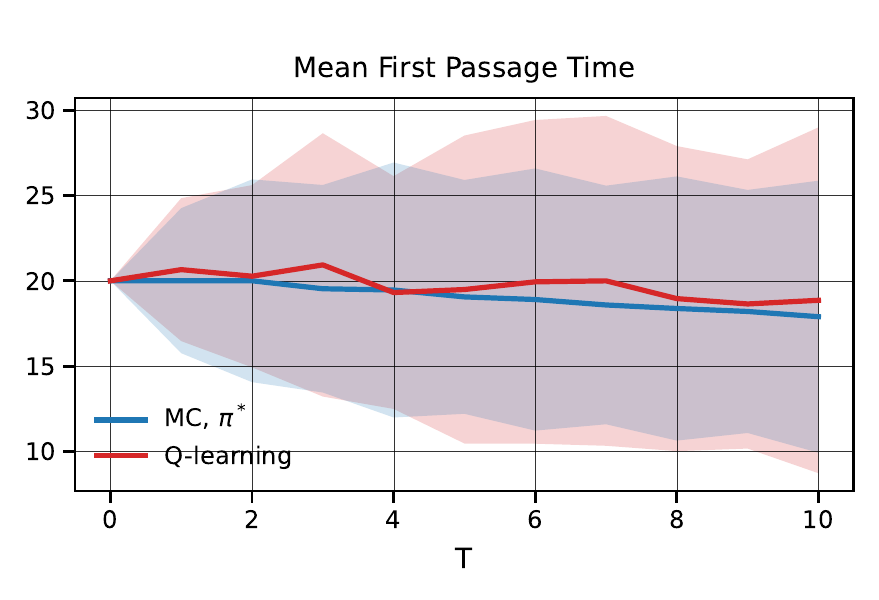}
  \caption{Q-learning performs slightly worse than the best found policy. MC: $10^6$ runs, Q-learning: $10^4$ agents, $\alpha =0.1$, learning time = 50K time frames. The lines show MFPT while the clouds come from the distribution of first passage times. Observe the increase of dispersion in first passage times when applying Q-learning.}
  \label{fig:1D_mfpt}
\end{figure}
 
Table \ref{table:1D} demonstrates that when the action "right" is optimal in position $x_0$ agents perform it in $x_0$ and $x_0-1$, i.e. instead of  $\pi_R$ they follow $\pi_{RR}$, instead of $\pi_{RR}$ they follow $\pi_{RRR}$ with an exception of weak noises ($T=1,2$).

\begin{table}[H]
  \caption{The most common policy $\pi_Q$, $10^4$~agents, $\alpha$~=~0.1, learning~time~=~50k time frames}
  \label{table:1D}
  \centering
  \begin{tabular}{|p{10pt} p{25pt} p{30pt} p{30pt} p{30pt}|} 
    \hline
      &  & \multicolumn{3}{c}{Right actions,\%} \\
    \cmidrule(r){3-5}
    T & $\pi_Q$ & $x_0$ & $x_0 - 1$ & $x_0 - 2$ \\
    \hline
    0 & $\pi_0$ & 50 & 0 & 0 \\
    1 & $\pi_R$ & \orange{99} & 0 & 0 \\
    2 & $\pi_R$ & \orange{97} & 0 & 0 \\
    3 & $\pi_{RR}$ & \green{100} & \red{75} & 0 \\
    4 & $\pi_{RR}$ & \green{100} & \red{91} & 0 \\ 
    5 & $\pi_{RR}$ & \green{100} & \red{96} & 1 \\ 
    6 & $\pi_{RR}$ & \green{100} & \red{100} & 1 \\ 
    7 & $\pi_{RR}$ & \green{100} & \red{100} & \red{50} \\ 
    8 & $\pi_{RR}$ & \green{100} & \red{100} & \red{41} \\ 
    9 & $\pi_{RRR}$ & \green{100} & \red{100} & \red{95} \\ 
    10 & $\pi_{RRR}$ & \green{100} & \green{100} & \red{80}\\
    \hline
  \end{tabular}
\end{table}

\subsection{Effect of drift in the heated region}

In order to strengthen the statement that the algorithms considered are prone to biases we construct the following example. We add a drift pushing the agent from the end of the interval in the heated part (see Fig. \ref{fig:1D_case_drift}). Its purpose is to gradually make the $\pi_R$ policy less profitable. The drift in our numerical experiment occurs only in the last right quarter of the interval (i.e. 25 per cent of the length). Its action on the agent is defined by a probability to make an extra move left.

\begin{figure}[H]
  \centering
  \includegraphics[width=0.8\linewidth]{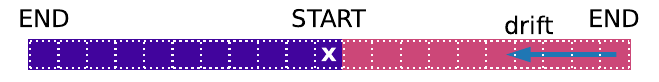}
  \caption{Example of 1D environment with a drift in some of the heated states}
  \label{fig:1D_case_drift}
\end{figure}

\begin{figure}[H]
 \begin{subfigure}{\linewidth}
  \includegraphics[width=\linewidth]{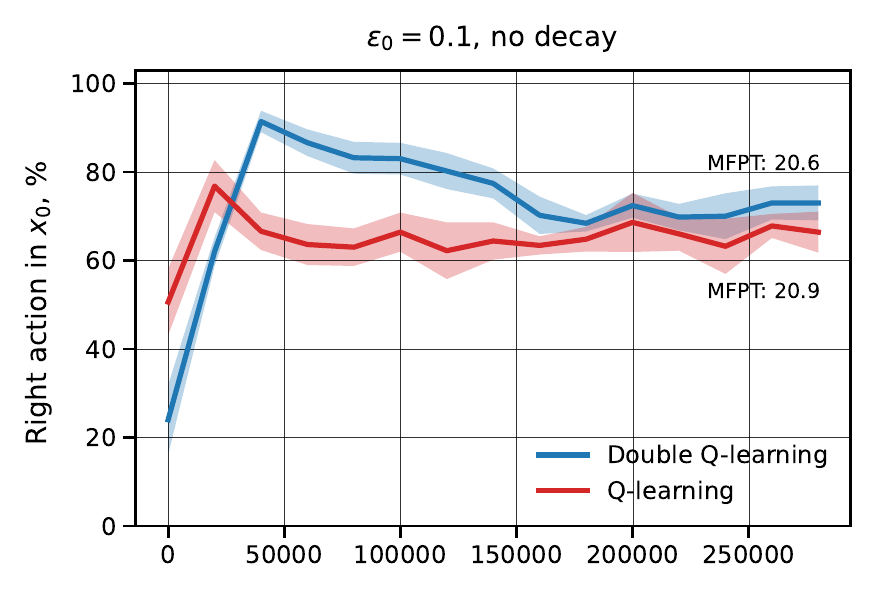}
 \end{subfigure}
 \begin{subfigure}{\linewidth}
  \includegraphics[width=\linewidth]{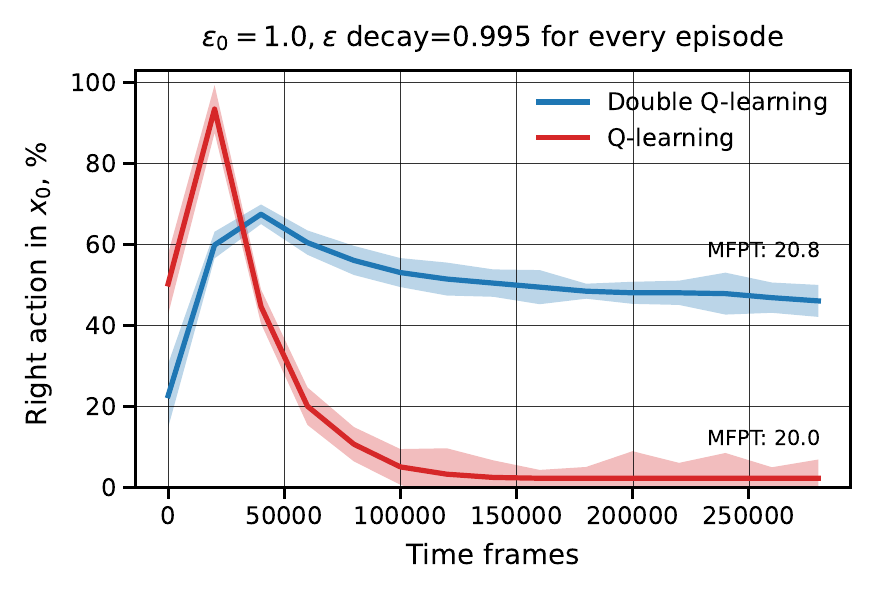}
 \end{subfigure}
 \caption{Simulation with drift value = 0.3, $T=3$, $\alpha=0.1, \gamma=0.9$, statistics is gathered over 5 runs with 100 agents. The right action in the center $x_0$ is non-optimal. Top: $\varepsilon=0.1$, biased policy is hard to overcome for both Q-learning and double Q-learning. Bottom: exponential decay of $\varepsilon$ enables Q-learning to achieve the optimal strategy, but has a moderate effect with Double Q-learning.}
 \label{fig:policy_trend}
\end{figure}
In the absence of thermal fluctuations ($T=0$) agents easily detect the drift for shift probability of $\sim0.1$ and select $\pi_L$ policy. Turning on even small $T=1$ effectively hides this non-optimality for majority of population.

Increase in fluctuations scale makes it possible to hide more intensive drift, as Table \ref{table:drift} shows. At $T=1$ only one agent out of ten is insensitive to the drift probability of 0.1 which yields approximately $9\%$ worse mean score. Similar $9\%$ decrease at $T=3$ is given by the drift value of 0.35, but this time it stays unnoticed by nearly 40\% of agents.

The numbers above are obtained after 50K time frames for Q-learning.
The top plot in Fig. \ref{fig:policy_trend} shows how a percent of non-optimal right actions in the center $x_0$ is changed for Q-learning and Double Q-learning through the learning process.
As one can see, there is no steady progress towards the optimal policy for both algorithms after playing 300K time frames.

First-passage properties of media affect the policy selection. Poorly trained agents are able to reach target in first stages of learning faster due to noise kicks and then they tend to stick to these policies.

As in 2D simulations, the proper scheduling of $\varepsilon$ for Q-learning (exponential decay starting from $\varepsilon_0=1.0$ in this case) significantly improves convergence to optimal MFPT value, Fig. \ref{fig:policy_trend}, bottom plot. Surprisingly, the scheduling has very moderate effect when applied with Double Q-learning, which is conventionally seen as a superior alternative of plain Q-learning for stochastic environments.

\begin{table}[H]
  \caption{Q-learning for the case with drift: $10^4$~agents, $\alpha$~=~0.1, learning~time~=~50k~time steps, difference between policies: MC $10^4$ runs. Right actions are suboptimal.}
  \label{table:drift}
  \setlength{\tabcolsep}{4pt}
  \centering
  \begin{tabular}{|p{10pt} p{25pt} p{50pt} p{80pt}|}
    \hline
    T & drift & $\displaystyle\frac{\pi_R - \pi_L}{\pi_L}$, \% & Right actions in $x_0$, \% \\
    \hline
 1 & 0.10 & 4.3 & 75 \\
   & 0.15 & 6.7 & 31 \\
   & 0.20 & 9.5 & 10 \\ 
    \hline
 2 & 0.10 & 3.7 & 67 \\
   & 0.15 & 5.9 & 36 \\ 
   & 0.20 & 8.3 & 16 \\
    \hline
 3 & 0.10 & 0.5 & 99 \\ 
   & 0.15 & 2.2 & 96 \\ 
   & 0.20 & 3.7 & 94 \\ 
   & 0.25 & 5.9 & 86 \\ 
   & 0.30 & 7.5 & 68 \\
   & 0.35 & 9.2 & 42 \\
   & 0.40 & 12.0 & 25 \\
    \hline
\end{tabular}

\end{table}



\section{Conclusions}

The application of machine learning algorithms to real physical systems have to be tested and vetted by using toy models in order to understand and anticipate possible biases. In this paper we find that four well-established tabular reinforcement learning algorithms show bias in terms of producing suboptimal solutions for the problem of fastest boundary crossing in gridworlds with state-dependent noise. We name this type of gridworlds as heated gridworlds. The state-dependent noise affecting the work of algorithms can occur for different physical reasons starting from uneven temperature distribution or concentration variations in the case of atmosphere pollution to long-lived current patterns in the water or the atmosphere. For 1D and 2D heated gridworlds we see a pronounced bias for  Q-learning,  SARSA,  Expected  SARSA  and  Double  Q-learning: High learning rate prevents the algorithm from operating in stochastic media while for small enough $\alpha$ agents tend to go through noisy regions, even when these policies are suboptimal. 


We clearly see that the methods developed in recent years to tackle the case of noisy rewards (e.g. Double Q-learning) do not necessarily offer the same benefits for learning in the case of stochastic transition dynamics. Our work is a sandbox example which could be useful for those who looks into new applications of temporal-difference algorithms primarily in physics, navigation and trading or generally study capabilities of these optimisation tools. We expect that similar effects could be found in other environments with unevenly distributed noise.


\appendix

\section{Algorithms}\label{algorithms}
\begin{algorithm}
\DontPrintSemicolon
\SetAlgoNoLine
 Initialize $Q(s, a)$, $S$ \;
 \Repeat{\upshape done}{
  Choose $A = \argmax_a Q(S, a)$ or with $\varepsilon$ probability choose random $a$ \;
  Take action $A$, observe $R$ and $S'$ \;
  $Q(S, A) \leftarrow Q(S, A) + \alpha \ \big( r + \gamma \ \max_a Q(S', a) - Q(S, A)\big)$ \;
   $S \leftarrow S'$
 }
 \caption{Q-learning with $\varepsilon$-greedy policy \cite{watkins1992q}}
 \label{algo:q-learning}
\end{algorithm}
\begin{algorithm}
\DontPrintSemicolon
\SetAlgoNoLine
 Initialize $Q(s, a)$, $S$\;
 Choose $A$ from $\varepsilon$-greedy policy\;
 \Repeat{\upshape done}{
  Take action $A$, observe $R$ and $S'$ \;
  Choose $A'$ from $\varepsilon$-greedy policy \;
  $Q(S, A) \leftarrow Q(S, A) + \alpha \ \big(R + \gamma \ Q(S', A') - Q(S, A)\big)$ \;
   $S \leftarrow S'$, $A \leftarrow A'$
 }
 \caption{SARSA \cite{rummery1994sarsa}}
 \label{algo:sarsa}
\end{algorithm}
\begin{algorithm}
\DontPrintSemicolon
\SetAlgoNoLine
 Initialize $Q(s, a)$, $S$ \;
 \Repeat{\upshape done}{
  Choose $A$ from $\varepsilon$-greedy policy \;
  Take action $A$, observe $R$ and $S'$ \;
  $Q(S, A) \leftarrow Q(S, A) + \alpha \ \big( R + \gamma \ \mathbb{E}_{\pi}\big[ Q(S', a)\big] - Q(S, A)\big)$ \;
   $S \leftarrow S'$
 }
 \caption{Expected SARSA \cite{john1994expected_sarsa}}
 \label{algo:expected_sarsa}
\end{algorithm}
\begin{algorithm}
\DontPrintSemicolon
\SetAlgoNoLine
 Initialize $Q^A(s, a)$, $Q^B(s, a)$, $S$ \;
 \Repeat{\upshape done}{
  Choose $A$ based on $Q^A(S,\cdot)$ and $Q^B(S, \cdot)$, observe $R$ and $S'$ \;
  Choose random UPDATE(A) or UPDATE(B) \;
  \uIf{\upshape UPDATE(A)}{
     Define $a^\star = \argmax_a Q^A(S, a)$\;
     $Q^A(S, A) \leftarrow Q^A(S, A) + \alpha \ \big( r + \gamma \ Q^B(S', a^\star) - Q^A(S, A)\big)$ \;
   }
   \uElseIf{\upshape UPDATE(B)}{
     Define $b^\star = \argmax_a Q^B(S, a)$\;
     $Q^B(S, A) \leftarrow Q^B(S, A) + \alpha \ \big( r + \gamma \ Q^A(S', b^\star) - Q^B(S, A)\big)$ \;
   }
   $S \leftarrow S'$
 }
 \caption{Double Q-learning \cite{hasselt2010double}}
 \label{algo:double_learning}
\end{algorithm}

\vspace{\parskip}

\section{Theoretical analysis}\label{theory}

In this appendix we provide some theoretical analysis of the case considered in Section \ref{sec:1d-sim}.
We show bounds on survival probabilities and finally show well-posedness of the studied gridworld boundary crossing problem. The following notation will be used: $\overline{0,N}$ means the set $\{0, 1, \dots, N\}$.
We fix an ambient probability space $(\Omega, \Sigma, \mathbb P)$ and consider (cf. \eqref{eqn:gridworld-env}):
\begin{equation}
\label{eqn:statement}
    \begin{array}{l}
        \tilde{S}_{t + 1} = \tilde{S}_t + \pi(\tilde{S}_t) + W_t(\tilde{S}_t), \\
        S_{t + 1} = \tilde{S}_{t + 1}, \lvert\tilde{S}_{t + 1}\rvert \leq \zeta \ \land S_t \in \left[-\zeta, \zeta\right], \\
        S_{t + 1} = s_*, \text{ otherwise}, \\
        W_t(k) = 2\eta_t(k) - w_k, \ \eta_t(k) \sim \mathcal{B}\left(w_k, \frac{1}{2}\right), \ w_k \geq w > 1, \\
        \forall k \in \mathbb{Z} \ : \ \pi(k) \in \left\{-1, 0, 1 \right\}, \\
        S_0 = \tilde{s}_0 \in \left[-\zeta, \zeta\right] \cap \mathbb{Z},
    \end{array}
\end{equation}
where $\mathcal B(a, b)$ denotes the binomial distribution with parameters $a, b$ and the policy comprises of the choices to go either left or right (we allow zero ``noise-induced'' move here only for convenience).
Let $\bar{\mathcal S}$ denote $[-\zeta, \zeta]\cap\mathbb{Z}$ and let $\mathcal S$ denote $\bar{\mathcal S}\cup\{x_*\}$.
Note that the state-space equation for $S$ can be rewritten as (cf. \eqref{eqn:gridworld-env-alt})
\begin{equation}
  \begin{array}{ll}
	S_{t + 1} = S_t + \pi(S_t) + W_t(S_t), & S_t \in \left[-\zeta, \zeta\right] \land \\
	& \lvert S_t + \pi(S_t) + W_t(S_t) \rvert\\
	& \leq \zeta \\
	S_{t + 1} = s_*, & \text{otherwise}
  \end{array}
\end{equation}

Therefore, $S$ and $\tilde{S}$ represent mutually ``independent'' dynamical systems that can be considered on their own. From this point onward the dynamical systems corresponding to $S$ and $\tilde{S}$ will be referred to as ``the absorbing system'' and ``the free system'' accordingly.
There is, however, a certain connection between the trajectories of the two systems. Since both systems by definition have identical noise and control, the two of them also share the same probability space.
Another notable connection is that the trajectories of the two systems coincide up until the first passage beyond $\zeta$ or $-\zeta$.
Both systems are evidently Markov decision processes with stationary control.
And since the absorbing system has a finite number of states, one can construct its probability transition matrix under the assumption that $\pi$ is fixed.
Obviously, the absorbing system only has $2r + 2$ states, $2r + 1$ of which are $-\zeta, 1 - \zeta, \dots, \zeta$ and the remaining one is $s_*$.
To construct the probability transition matrix $G$ we first need to order the states.
Let the order be $\left(-\zeta, \dots, \zeta, *\right)$, and thus let
$$
G^t = \left[g^t_{-\zeta}, \dots, g^t_{\zeta}, g^t_*\right]
$$
Now, let $\bar{G} = \left[\bar{g}^t_{-\zeta}, \dots, \bar{g}^t_{\zeta}\right]$ be the top left square submatrix of $G$ of size $2\zeta + 1$.

We have:
\begin{lemma}
\label{IteratedSubmatrixna}
The matrix $\bar{G}^t$ is the top left submatrix of $G^t$.
\end{lemma}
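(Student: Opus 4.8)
The plan is to exploit the single structural feature of the chain that makes everything work: $s_*$ is absorbing. By the dynamics in \eqref{eqn:statement}, once $S_t = s_*$ the guard $S_t \in [-\zeta,\zeta]$ fails and the ``otherwise'' branch returns $S_{t+1} = s_*$ with probability one. Hence, with the chosen ordering $(-\zeta,\dots,\zeta,*)$, the last row of $G$ is $(0,\dots,0,1)$, and $G$ acquires the block upper-triangular form
\begin{equation*}
    G = \begin{pmatrix} \bar{G} & c \\ 0 & 1 \end{pmatrix},
\end{equation*}
where $\bar{G}$ is the top-left $(2\zeta+1)\times(2\zeta+1)$ submatrix (the sub-stochastic block of transitions among the grid states), $c$ is the column of one-step absorption probabilities, and the lower-left block is a zero row. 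The whole claim then reduces to showing that this block-triangular shape is preserved under taking powers.

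Next I would prove by induction on $t$ that
\begin{equation*}
    G^t = \begin{pmatrix} \bar{G}^t & c_t \\ 0 & 1 \end{pmatrix}
\end{equation*}
for some column vector $c_t$. The base case $t=1$ is the decomposition above (with $c_1 = c$). For the inductive step I would compute $G^{t+1} = G^t G$ by block multiplication: the $(1,1)$ block is $\bar{G}^t \bar{G} = \bar{G}^{t+1}$, the lower-left block stays zero because $0\cdot\bar{G} + 1\cdot 0 = 0$, and the lower-right entry stays $1$. Reading off the $(1,1)$ block of $G^t$ then gives exactly $\bar{G}^t$, which is the assertion of the lemma.

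The argument is essentially mechanical, so the only point that genuinely needs care is the structural claim feeding the induction, namely that the lower-left block of $G$ vanishes. This is precisely the statement that there is no transition from $s_*$ back into $\bar{\mathcal S}$, which must be read off carefully from \eqref{eqn:statement}: the ``absorbing'' branch is entered whenever the guard fails, and $s_*$ itself never satisfies the guard, so escape from $s_*$ has probability zero. Once this is secured, the block algebra closes the argument, and no spectral or probabilistic input beyond the absorbing property is required.
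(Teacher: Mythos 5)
Your proof is correct and takes essentially the same route as the paper's: the absorbing property of $s_*$ forces a block-triangular structure on $G$, and induction via block multiplication shows the $(1,1)$ block of $G^t$ equals $\bar{G}^t$. The only cosmetic difference is orientation --- the paper uses the column-stochastic convention (it applies $G^t$ to column distribution vectors via $G^t P^s_0 = g^t_s$, so its zero block sits in the top-right and the last \emph{column} of $G$ is $(0,\dots,0,1)^\top$), whereas you state the row-stochastic transpose with the last \emph{row} equal to $(0,\dots,0,1)$; the block algebra and the conclusion are identical either way.
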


\begin{proof}
Since it is impossible to escape the absorbing state $s_*$, the following identity holds:
$$
\forall k \in \mathbb{N} \ : \ g^k_* = (0 \ 0 \ \dots \ 0 \ 1)^\top
$$

The lemma can be proven by simple mathematical induction.
Namely,
\begin{description}
    \item[Induction base:] 
\[G = 
\left[\begin{array}{@{}c|c@{}}
  \bar{G}
  &  \begin{matrix}
  0 \\
  0 \\
  \dots \\
  0 \\
  \end{matrix}\\
\hline
  \dots &
  1
\end{array}\right]
\]
    \item[Induction step:]
\[ 
\begin{array}{l}
	G G^t = 
	\left[\begin{array}{@{}c|c@{}}
	  \bar{G}
	  &  \begin{matrix}
	  0 \\
	  0 \\
	  \dots \\
	  0 \\
	  \end{matrix}\\
	\hline
	  \dots &
	  1
	\end{array}\right] \left[\begin{array}{@{}c|c@{}}
	  \bar{G}^t
	  &  \begin{matrix}
	  0 \\
	  0 \\
	  \dots \\
	  0 \\
	  \end{matrix}\\
	\hline
	  \dots &
	  1
	\end{array}\right] = \\
		\left[\begin{array}{@{}c|c@{}}
		  \bar{G}\bar{G}^t + 0^{(2r + 1) \times (2r + 1)}
		  &  \bar{G}0^{2r + 1} + 1\cdot0^{2r + 1}\\
		\hline
		  \dots &
		  0 + 1 \cdot 1
		\end{array}\right]
	\end{array}
\]
\end{description}

\end{proof}

Let $P^s_0$ be the initial probability distribution vector under the assumption that $S_0 = s$.
Structurally, the latter would be a binary vector with a single entry equal $1$ at the position that corresponds to the chosen initial state.
Therefore,
$$
G^tP^s_0 = g^t_s
$$
Evidently, $g^t_s$ is the distribution vector at step the $t$ under the assumption that $s$ was the initial state.

Similarly let us define $\bar{P}^s_0$ as $P^s_0$ without the last element.
The analogy is clear:
$$
\bar{G}^t\bar{P}^s_0 = \bar{g}^t_s.
$$

Let $H_t \subseteq \Sigma$ be the event that at step $t$ the absorbing system is still not in state $s_*$.
Obviously, $\PP{H_t}$ is the survival probability.
\begin{lemma}
\label{lem:SurvivalEssencena} If $s_0 = s$, then $P^s_t = \lVert \bar{g}^t_s \rVert_1$.
\end{lemma}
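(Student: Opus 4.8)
The plan is to read $P^s_t$ as the survival probability $\PP{H_t}$ under the assumption $S_0 = s$, and then to express that probability as the total mass the time-$t$ law places on the non-absorbing states. The key object is $g^t_s = G^t P^s_0$, which is precisely the distribution of the absorbing system at step $t$ when started deterministically at $s$; its entries are therefore nonnegative and sum to one, with its final entry recording the probability of having already been absorbed into $s_*$. Summing the remaining entries will give the survival probability, and the identification of those entries with $\bar{g}^t_s$ is handed to us by Lemma~\ref{IteratedSubmatrixna}.

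First I would record that absorption is monotone: since $g^k_* = (0\ 0\ \dots\ 0\ 1)^\top$ for every $k$ (as already observed in the proof of Lemma~\ref{IteratedSubmatrixna}), once the chain enters $s_*$ it can never leave. Hence the survival event $H_t$ coincides with the single-time event $\{S_t \neq s_*\}$. Because $g^t_s$ is a probability vector, $\PP{H_t}$ then equals one minus its last entry, which in turn equals the sum of its first $2\zeta + 1$ entries, i.e. the mass it places on $\bar{\mathcal S}$.

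Next I would match those leading $2\zeta + 1$ entries to $\bar{g}^t_s$. Since $s \in \bar{\mathcal S}$, the vector $P^s_0$ carries a zero in its last coordinate, so it splits as $\bar{P}^s_0$ padded with a trailing $0$. Combining this with the block form of $G^t$ supplied by Lemma~\ref{IteratedSubmatrixna} shows that the leading components of $g^t_s = G^t P^s_0$ are exactly $\bar{G}^t \bar{P}^s_0 = \bar{g}^t_s$. Finally, as every entry of $\bar{g}^t_s$ is a nonnegative probability, its ordinary coordinate sum coincides with its $\ell_1$ norm, which delivers $P^s_t = \lVert \bar{g}^t_s \rVert_1$.

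The step I expect to require the most care is the first one, namely pinning down that $H_t$ is genuinely the marginal event $\{S_t \neq s_*\}$ rather than the path-level event that the trajectory avoided $s_*$ at all earlier steps. The absorbing structure is exactly what makes these two descriptions equivalent, and it is this equivalence that licenses reading the survival probability off the single marginal $g^t_s$ instead of tracking the whole history. Everything after that is routine bookkeeping with the block decomposition and the nonnegativity of probabilities.
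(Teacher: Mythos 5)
Your proposal is correct and takes essentially the same route as the paper's proof: both identify $\bar{g}^t_s$ as the subvector of the time-$t$ distribution $g^t_s$ obtained by dropping the entry for the absorbing state $s_*$ (via the block structure of Lemma~\ref{IteratedSubmatrixna}), and then equate the nonnegative coordinate sum, i.e.\ the $\ell_1$ norm, with the survival probability $\PP{H_t}$. The point you flag as delicate --- that survival up to time $t$ coincides with the marginal event $\{S_t \neq s_*\}$ --- is already built into the paper's definition of $H_t$ and justified, as in your argument, by the fact that $s_*$ is absorbing.
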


\begin{proof}
$\bar{g}^t_s$ is a subvector of $g^t_s$.
The only element of $g^t_s$ that $\bar{g}^t_s$ does not include is the one that corresponds to the absorbing state $s_*$.
Therefore, $\bar{g}^t_s$ comprises probabilities of being in states $-\zeta, \dots, \zeta$ at step $t$ accordingly.
Since $\lVert\bar{g}^t_x\rVert_1$ is the sum of absolute values of elements of $\bar{g}^t_x$, evidently $\lVert\bar{g}^t_x\rVert_1$ is the probability of being in any state other than $s_*$ by step $t$ or, equivalently,
$$
\PP{H_t} = \lVert \bar{g}^t_s \rVert_1.
$$
\end{proof}

We have the following upper bound on the survival probability:
\begin{lemma}
\label{lem:SurvivalUpperBoundna}
It holds that $\PP{H_\tau} \leq 1 - 2^{-w\tau}$, where $\tau = \floor{\frac{\zeta}{w - 1}} + 1$.
\end{lemma}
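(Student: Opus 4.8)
The plan is to upper bound the survival probability by exhibiting, for an arbitrary fixed policy, an explicit noise event that forces absorption within $\tau$ steps, and then to lower bound the probability of that event by $2^{-w\tau}$. Since $H_\tau$ and its complement partition the sample space, $\PP{H_\tau} = 1 - \PP{\Omega \setminus H_\tau}$, where $\Omega\setminus H_\tau$ is exactly the event that the absorbing system has reached $s_*$ by step $\tau$ (cf. Lemma~\ref{lem:SurvivalEssencena}); hence it suffices to show $\PP{\Omega\setminus H_\tau} \ge 2^{-w\tau}$.

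First I would reduce the geometry. For any admissible initial state $s_0 \in [-\zeta,\zeta]$ at least one boundary lies within distance $\zeta$, since $\min(\zeta - s_0,\, s_0+\zeta)\le\zeta$. Fix the direction toward the nearer boundary (rightward, say); by the symmetry of the binomial noise under $\eta_t \mapsto w_k - \eta_t$ the leftward case is identical.

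Next I would specify a sufficient noise event. At each step the control $\pi(\cdot)\in\{-1,0,1\}$ can oppose the chosen direction by at most one unit, while the disturbance $W_t(k)=2\eta_t(k)-w_k$ can push by as much as $w_k\ge w$ in that direction. Realizing the fully aligned noise (all $w_k$ Bernoulli trials favorable) therefore yields a net displacement of at least $w_k-1\ge w-1$ per step toward the boundary. Because $\tau=\floor{\zeta/(w-1)}+1$ obeys $\tau(w-1)>\zeta$, carrying this out for $\tau$ steps accumulates displacement exceeding $\zeta$ and so certainly crosses; as the free and absorbing systems coincide up to the first passage, the absorbing system sits in $s_*$ by step $\tau$.

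The delicate step is the probability bound. The fully aligned realization at state $k$ has probability $2^{-w_k}$, and chaining the per-step events conditionally on the past (via the Markov property and the tower rule) factorizes the escape probability. Under the model's uniform-noise reading $w_k\equiv w$ this product is exactly $2^{-w\tau}$ and the claim follows at once. The obstacle is the genuinely state-dependent case: since $w_k>w$ makes $2^{-w_k}<2^{-w}$, the fully aligned event is too small to give the bound, and one must instead bound from below the probability that the \emph{cumulative} displacement over the $\tau$ steps exceeds $\zeta$ — a binomial deviation estimate that is tight (equality) in the uniform case and whose general verification, with attention to parity of the walk, is the main technical hurdle. Granting a uniform per-step lower bound of $2^{-w}$ for sufficient progress, chaining over $\tau$ steps gives $\PP{\Omega\setminus H_\tau}\ge 2^{-w\tau}$ and hence $\PP{H_\tau}\le 1-2^{-w\tau}$.
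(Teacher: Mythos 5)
Your proposal follows essentially the same route as the paper's proof: reduce $\PP{H_\tau}\le 1-2^{-w\tau}$ to showing the absorption event has probability at least $2^{-w\tau}$, exhibit the all-favorable-noise event (the paper's $K_t=\bigcap_{i=0}^{t-1}\{W_i(\tilde S_i)=w\}$), observe that on this event the adversarial control $\pi\in\{-1,0,1\}$ still leaves net progress at least $w-1$ per step so that $\tau(w-1)>\zeta$ forces the free system past the boundary, invoke the coincidence of free and absorbing trajectories up to first passage, and handle the initial condition via $\tau(s)=\floor{\frac{\zeta-s}{w-1}}+1\le\tau(0)=\tau$ with the sign-symmetric case dismissed by symmetry. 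Under the uniform reading $w_k\equiv w$ your argument is complete and matches the paper step for step.

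The one place you diverge is the ``delicate step'' you flag and then grant, and here the comparison is instructive in both directions. Your granted per-step bound $\PP{W(k)\ge w}\ge 2^{-w}$ is indeed false in the state-dependent case: for $w_k=w+1$ one has $W=2\eta-(w+1)$ with $\eta\sim\mathcal B(w+1,\tfrac12)$, so $W\ge w$ forces $\eta=w+1$ and the probability is $2^{-(w+1)}<2^{-w}$ (and by parity the event $\{W=w\}$ is outright empty whenever $w_k-w$ is odd). But note that the paper does not close this gap either: its entire probability estimate is the one-line claim that each event in $K_t$ has probability at least $\binom{w}{w}(\tfrac12)^w$, giving $\PP{K_t}\ge 2^{-wt}$ --- an assertion that holds with equality when $w_k=w$ and fails, for the reason you identified, when $w_k>w$. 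So relative to the paper you have not missed an idea; you have honestly isolated the step the paper glosses over. Your instinct that full generality requires a cumulative tail estimate rather than per-step chaining is right, though be aware it is not free: for example, splitting $W(k)$ into an independent $w$-trial part plus a symmetric mean-zero remainder yields, via the all-success event for the first part and $\PP{\text{remainder sum}\ge 0}\ge\tfrac12$, only a bound of order $2^{-w\tau-1}$, so recovering the exact constant $2^{-w\tau}$ in the state-dependent case would need a finer argument than either you or the paper provides.
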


\begin{proof}
Without loss of generality let us assume that $s_0 \geq 0$. 

Let $K_t = \bigcap\limits_{i = 0}^{t - 1}\left\{\omega \in \Omega \ | \ W_i(\tilde{S}_i)[\omega] = w\right\}$.
The probability of each event within the latter intersection is no less than ${w \choose w} (\frac{1}{2})^w(\frac{1}{2})^0$, and since the events are independent, it holds that 
\begin{equation}
\label{eqn:EdgeProba}
    \PP{K_t} \geq 2^{-wt}.
\end{equation}

We denote the event that the state of the environment is outside $\mathcal S$ at $t$ as $E_t := \left\{\omega \in \Omega \ | \ \lvert\tilde{S}_t[\omega]\rvert > \zeta \right\}$.
According to (\ref{eqn:statement}), this event obviously implies that the agent has not survived by $t$, therefore, 
\begin{equation}
\label{eqn:implication1}
  E_t \subset \bar{H}_t.  
\end{equation}

For each elementary outcome from $K_t$, the following identity trivially follows from definition of $K_t$:
$$
\forall \omega \in K_t \ : \ \tilde{S}_t(\omega) = \sum_{i = 0}^{t - 1}\pi(s_i) + \sum_{i = 0}^{t - 1}w_i + s_0.
$$
According to (\ref{eqn:statement}), we have: 
$$
\pi(s_i) \geq -1, \ i = \overline{0,t - 1}.
$$

Therefore,
$$
\sum_{i = 0}^{t - 1}\pi(s_i) + \sum_{i = 0}^{t - 1}w + s_0 \geq -t + tw + s_0.
$$
thus
$$
\forall \omega \in K_t \ : \ \lvert\tilde{S}_(\omega) \rvert \geq t(w - 1) + s_0.
$$
Since the above holds for arbitrary $t \geq 1$, then it should also hold for $\tau(s): = \floor{\frac{\zeta - s}{w - 1}} + 1$ as long as $s \in \left[0, \zeta\right]$.
Evidently, $\tau(s)$ has the following important property
$$
\tau(s) > \frac{\zeta - s}{w - 1}.
$$
Therefore,
$$
\tau(s)(w - 1) + s > \zeta.
$$
This in turn means
$$
\forall \omega \in K_{\tau(s_0)} \ : \ \lvert\tilde{S}_{\tau(s_0)}(\omega)\rvert \geq \tau(s_0)(w - 1) + s_0 > \zeta,
$$
whence
$$
\forall \omega \in K_{\tau(s_0)} \ : \ \lvert\tilde{S}_{\tau(s_0)}(\omega)\rvert > \zeta.
$$
Notably, this implies that 
\begin{equation}
\label{eqn:implication2}
K_{\tau(s_0)} \subset E_{\tau(s_0)}    
\end{equation}
and, therefore, according to (\ref{eqn:EdgeProba}), (\ref{eqn:implication1}) and (\ref{eqn:implication2}), we have:
$$
K_{\tau(s_0)} \subset \bar{H}_{\tau(s_0)}
$$
and
$$
\PP{ \bar{S}_{\tau(x_0)} } \geq \PP{K_{\tau(x_0)} } \geq 2^{-w\tau(s_0)}.
$$
Finally, 
\begin{equation}
\label{eqn:finally}
\PP{\bar{S}_{\tau(x_0)}} \geq 2^{-w\tau(s_0)}.
\end{equation}
Let $\tau$ (with no argument) denote $\tau(0)$.
It can be easily observed from the definition of $\tau(s)$ that
$$
\forall s_0 \in [0, \zeta] \ : \ \tau(s_0) \leq \tau.
$$
This has two important implications.
Firstly,
\begin{equation}
\label{eqn:implication3}
  \forall s_0 \in [0, \zeta]\cap\mathbb{Z} \ : \ \bar{H}_{\tau(x_0)} \subset  \bar{H}_\tau  
\end{equation}
since obviously $\bar{H}_t$ implies $\bar{H}_{t + 1}$.
And secondly
\begin{equation}
\label{eqn:WeakerInequality}
    \forall s_0 \in [0, \zeta]\cap\mathbb{Z} \: \ 2^{-w\tau(x_0)} \geq 2^{-w\tau}.
\end{equation}
Combining (\ref{eqn:finally}), (\ref{eqn:implication3}) and (\ref{eqn:WeakerInequality}), we can determine that the following holds under the assumption that the initial state $s_0$ is non-negative
$$
\PP{\bar{H}_\tau} \geq 2^{-w\tau}, \ \tau = \floor{\frac{\zeta}{w - 1}} + 1
$$
or, equivalently,
$$
\PP{\bar{H}_\tau} \leq 1 - 2^{-w\tau}, \ \tau = \floor{\frac{\zeta}{w - 1}} + 1.
$$
The same can be proven for non-positive values of $s_0$ in a similar fashion.
\end{proof}

%
%

We can furthermore bound the survival probability as follows:
\begin{lemma}
There is a $C > 0$ such that the survival probability satisfies the relation
$$
 \forall \sigma > (1 - 2^{-w\tau})^{\frac{1}{\tau}} \ : \  \PP{H_t} \leq C \sigma^t.
$$
\end{lemma}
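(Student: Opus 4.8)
The plan is to bootstrap the single-window estimate of Lemma~\ref{lem:SurvivalUpperBoundna} into a geometric decay by exploiting the Markov structure of the absorbing system. Write $q := 1 - 2^{-w\tau}$ with $\tau = \floor{\frac{\zeta}{w-1}} + 1$ as before, and observe first that the bound $\PP{H_\tau} \leq q$ holds \emph{uniformly} over the admissible initial states $s_0 \in [-\zeta, \zeta]\cap\mathbb{Z}$: the argument in Lemma~\ref{lem:SurvivalUpperBoundna} constructs $\tau$ from the worst-case starting position $s_0 = 0$ and only invokes $\tau(s_0) \leq \tau$, so nothing in it depends on the particular point from which a $\tau$-step window is launched.

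The key step is to iterate this estimate over consecutive windows of length $\tau$. Since the absorbing system is a time-homogeneous Markov chain, and survival for an additional $\tau$ steps (conditioned on the state occupied at time $k\tau$) is again bounded by $q$ regardless of that state, decomposing over the surviving states via total probability gives
$$
\PP{H_{(k+1)\tau}} = \PP{H_{(k+1)\tau}\mid H_{k\tau}}\,\PP{H_{k\tau}} \leq q\,\PP{H_{k\tau}}.
$$
A straightforward induction then yields $\PP{H_{n\tau}} \leq q^{n}$ for every $n \in \mathbb{N}$. To pass from multiples of $\tau$ to an arbitrary step $t$, I would use that the survival events are nested, $H_{t+1} \subseteq H_t$, so that $\PP{H_t} \leq \PP{H_{\tau\floor{t/\tau}}} \leq q^{\floor{t/\tau}}$. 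Finally, the elementary estimate $\floor{t/\tau} > t/\tau - 1$ together with $q < 1$ gives
$$
\PP{H_t} \leq q^{\floor{t/\tau}} < q^{-1}\left(q^{1/\tau}\right)^{t},
$$
and since any $\sigma > q^{1/\tau}$ satisfies $(q^{1/\tau}/\sigma)^{t} \leq 1$ for $t \geq 0$, the claim follows with the explicit constant $C = q^{-1} = (1 - 2^{-w\tau})^{-1}$, which in fact serves simultaneously for every admissible $\sigma$.

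The main obstacle I anticipate is making the conditional bound $\PP{H_{(k+1)\tau}\mid H_{k\tau}} \leq q$ fully rigorous. One must justify that Lemma~\ref{lem:SurvivalUpperBoundna} is genuinely a statement uniform in the starting state, and then invoke the Markov property to restart the chain at the \emph{random} state occupied at time $k\tau$; care is needed because conditioning on $H_{k\tau}$ mixes over all surviving states, so one has to verify that the worst-case per-window survival probability $q$ dominates this mixture. Everything after this — the induction and the bookkeeping with the floor function — is routine.
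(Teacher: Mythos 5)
Your proof is correct, and it takes a genuinely different route from the paper's. Both arguments rest on the same input---the window bound of Lemma~\ref{lem:SurvivalUpperBoundna}, which, as you rightly note, is uniform over initial states $s_0 \in [-\zeta,\zeta]\cap\mathbb{Z}$ because its proof only invokes $\tau(s_0) \leq \tau(0) = \tau$---but the bootstrapping mechanisms differ. The paper works linear-algebraically: using Lemma~\ref{lem:SurvivalEssencena} it packages the uniform window bound as a bound on the induced matrix norm, $\lVert \bar{G}^\tau \rVert_1 = \max_s \lVert \bar{g}^\tau_s \rVert_1 \leq 1 - 2^{-w\tau}$, and then asserts (in effect via the spectral-radius estimate $\rho(\bar{G}) \leq \lVert \bar{G}^\tau \rVert_1^{1/\tau}$ and the standard fact that matrix powers are dominated by $C\sigma^t$ for any $\sigma$ exceeding the spectral radius) the existence of a constant $C(s_0)$ for each initial state, finishing by a maximum over the finitely many states. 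Your Markov-restart iteration $\PP{H_{(k+1)\tau}} \leq q\,\PP{H_{k\tau}}$ is the probabilistic face of the submultiplicativity $\lVert \bar{G}^{(k+1)\tau} \rVert_1 \leq \lVert \bar{G}^\tau \rVert_1 \lVert \bar{G}^{k\tau} \rVert_1$, and the conditioning step you flag as delicate is sound in this setting: $H_{k\tau}$ is exactly the event $S_{k\tau} \in [-\zeta,\zeta]\cap\mathbb{Z}$ (since $s_*$ is absorbing), the chain is time-homogeneous (stationary policy, noise law depending only on the current state), and decomposing $\PP{H_{(k+1)\tau} \mid H_{k\tau}}$ by total probability over the surviving states bounds the mixture by the worst-case $q$; the only caveat, easily dispatched, is that the conditioning presumes $\PP{H_{k\tau}} > 0$, the degenerate case being trivial. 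What your route buys: it is more elementary (no spectral-radius fact is needed, whereas the paper's ``Observe that'' step is stated without justification), it yields the explicit constant $C = (1 - 2^{-w\tau})^{-1}$, valid simultaneously for every admissible $\sigma$ and every $s_0$, and it in fact establishes the slightly stronger bound at $\sigma = (1 - 2^{-w\tau})^{1/\tau}$ itself rather than only for strictly larger $\sigma$. What the paper's route buys is chiefly economy given the transition-matrix machinery already assembled in Lemmas~\ref{IteratedSubmatrixna} and~\ref{lem:SurvivalEssencena}; for the downstream use in Theorem~\ref{ExistenceOfExpectedValue}, where only the existence of some $C > 0$ and $\sigma \in (0,1)$ matters, either version suffices.
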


\begin{proof}
By Lemma \ref{lem:SurvivalEssencena} and Lemma \ref{lem:SurvivalUpperBoundna} we have
$$
\forall s \in [-\zeta, \zeta] \cap\mathbb{Z} \ : \ \lVert \bar{g}^\tau_s \rVert_1 \leq 1 - 2^{-w \tau},
$$
whence 
\begin{equation}
\label{eqn:NormRelation}
    \lVert \bar{G}^\tau \rVert_1^{\frac{1}{\tau}} = \left(\max\limits_s \lVert \bar{g}^\tau_s \rVert_1\right)^{\frac{1}{\tau}} \leq (1 - 2^{-w\tau})^{\frac{1}{\tau}}.
\end{equation}

Observe that
\begin{align}
& \forall s_0 \in [-\zeta, \zeta]\cup\mathbb{Z} \ \forall \sigma > (1 - 2^{-w\tau})^{\frac{1}{\tau}} \ \exists \ C(s_0) > 0 \ \forall t \geq 0 \ : \  \nonumber \\ 
& \lVert\bar{G}^t\bar{P}^{s_0}_0\rVert_1 = \lVert\bar{g}^t_{s_0}\rVert_1 = \PP{H_t} \leq C(x_0)\sigma^t \nonumber
\end{align}

Let $C$ (with no argument) denote $\max\limits_{s \in [-\zeta, \zeta]\cap\mathbb{Z}} C(s)$, whence
\begin{equation}
\label{eqn:GeneralFactor}
\begin{aligned}
	& \forall s_0 \in [-\zeta, \zeta]\cup\mathbb{Z} \ \forall \sigma > (1 - 2^{-w\tau})^{\frac{1}{\tau}} \ \forall t \geq 0 \ : \\
	& \lVert \bar{G}^t P^{s_0}_0 \rVert_1 \leq C\sigma^t.
\end{aligned}
\end{equation}

Earlier, we already established that $\bar{G}^t P^s_0 = \bar{g}^t_s$.
Therefore, by  Lemma \ref{lem:SurvivalEssencena} we have:
\begin{equation}
\label{eqn:SurvivalAndColumns}
\lVert\bar{G}^t \bar P^{s_0}_0 \rVert_1 = \lVert\bar{g}^t_{s_0}\rVert_1 = \PP{H_t}.    
\end{equation}

Combining (\ref{eqn:GeneralFactor}) and (\ref{eqn:SurvivalAndColumns}), we obtain
$$
\exists \ C > 0 \ \forall \sigma > (1 - 2^{-w\tau})^{\frac{1}{\tau}} \ \forall t \in \mathbb{N}\cup\{0\} \ : \  \PP{H_t} \leq C\sigma^t,
$$
where quantifying over $s_0$ was dropped, because according to (\ref{eqn:statement}) $H_t$ is independent of $s_0$.
\end{proof}


\begin{corollary}
There are also $C > 0, \sigma \in (0, 1)$ such that $\PP{H_t} \leq C\sigma^t$.
\end{corollary}

Now, let us formulate the reward function for the considered boundary crossing problem as follows (cf. \eqref{eqn:reward-gridworld}):
\[
\begin{array}{ll}
     r(s_t, \pi(s_t), w_t) =  \\
     \mathbb I_{\{s_*\}}(s_{t + 1})\mathbb I_{[-\zeta, \zeta]\cap\mathbb{Z}}(s_t)r_{\text{target}} - \mathbb I_{[-\zeta, \zeta]\cap\mathbb{Z}}(s_t),
\end{array}
\]
where $\mathbb I$ is the indicator function.
Let $R_{\text{tot}}$ be the random variable of total reward and let $J$ be the expected total reward:
$$
J[\pi(\cdot)] = \E{\sum_{t = 0}^\infty r(S_t, \pi(S_t), W_t)} = \E{R_{\text{tot}}}.
$$
The following theorem states that the objective is well-posed.
\begin{theorem}
\label{ExistenceOfExpectedValue}
The objective $J[\pi(\cdot)]$ exists and is finite for any admissible $\pi(\cdot)$.
\end{theorem}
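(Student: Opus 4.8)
The plan is to write the total reward $R_{\text{tot}}$ in closed form in terms of the survival events $H_t$, and then to dominate it by an integrable nonnegative random variable whose expectation is controlled by the geometric survival tail from the Corollary. First I would read off the structure of the reward function: every frame in which the agent is still inside $[-\zeta, \zeta]\cap\mathbb{Z}$ contributes $-1$, the unique frame on which it crosses the boundary contributes an additional $r_{\text{target}}$, and every frame after absorption contributes $0$ (both indicators vanish once $S_t = s_*$). Writing $N := \sum_{t = 0}^\infty \mathbb{I}_{H_t}$ for the total number of frames the absorbing system spends inside the grid, and letting $\mathcal{C}$ denote the event that the agent is eventually absorbed, this yields the identity $R_{\text{tot}} = -N + r_{\text{target}}\,\mathbb{I}_{\mathcal{C}}$, since $\mathbb{I}_{H_t} = \mathbb{I}_{[-\zeta, \zeta]\cap\mathbb{Z}}(S_t)$.

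The key estimate is $\E{N} < \infty$. Because the summands $\mathbb{I}_{H_t}$ are nonnegative, Tonelli's theorem (equivalently monotone convergence) permits interchanging expectation and summation, giving $\E{N} = \sum_{t = 0}^\infty \PP{H_t}$. By the Corollary there exist $C_0 > 0$ and $\sigma \in (0, 1)$ with $\PP{H_t} \leq C_0 \sigma^t$, so that $\E{N} \leq C_0 / (1 - \sigma) < \infty$. In particular $N < \infty$ almost surely, which means the agent crosses the boundary in finite time with probability one, i.e. $\PP{\mathcal{C}} = 1$.

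To conclude well-posedness I would use the bound $\lvert R_{\text{tot}}\rvert \leq N + \lvert r_{\text{target}}\rvert$, whose right-hand side is integrable since $\E{N}$ is finite and $r_{\text{target}}$ is a fixed constant. Absolute integrability guarantees that the defining series converges absolutely almost surely and that $\E{R_{\text{tot}}}$ exists and is finite; as a byproduct one even obtains $J[\pi(\cdot)] = -\E{N} + r_{\text{target}}$.

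The main obstacle to handle carefully is the interchange of expectation and the infinite sum for a \emph{signed} quantity: the raw series $\sum_t r(S_t, \pi(S_t), W_t)$ contains negative terms, so Tonelli cannot be applied to it directly. The clean route is the one above, namely to first pass to the nonnegative counting variable $N$, establish its finite expectation through the geometric survival bound, and only then recover the signed expectation via the dominating bound $\lvert R_{\text{tot}}\rvert \leq N + \lvert r_{\text{target}}\rvert$. Finally, I would note that this argument holds uniformly over admissible $\pi(\cdot)$, because the survival bound underlying the Corollary rests only on $w_k \geq w > 1$ and on $\pi(k) \in \{-1, 0, 1\}$, both of which are features of any admissible policy rather than of a particular one.
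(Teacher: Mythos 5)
Your proof is correct, but it takes a genuinely different route from the paper's. The paper partitions the sample space by the exact absorption time: with $D_t$ the event of perishing precisely at step $t$ and $D_\infty = \bigcap_{t} H_t$, it shows $\PP{D_\infty} = 0$ from the geometric survival bound, writes $\E{R_{\text{tot}}} = \sum_{t \geq 1}(r_{\text{target}} - t)\PP{D_t}$, and then bounds this series via $\PP{D_t} \leq \PP{H_{t-1}} \leq C\sigma^{t-1}$, summing the resulting arithmetico-geometric series in closed form. You instead use the occupation-time representation $R_{\text{tot}} = -N + r_{\text{target}}\,\mathbb I_{\mathcal C}$ with $N = \sum_t \mathbb I_{H_t}$, obtain $\E{N} = \sum_t \PP{H_t} \leq C/(1 - \sigma)$ by Tonelli (equivalently the tail-sum formula, since $\{N > t\} = H_t$ by monotonicity of survival), and finish by dominating $\lvert R_{\text{tot}} \rvert \leq N + \lvert r_{\text{target}} \rvert$. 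These are two faces of the same computation --- on $D_t$ one has $N = t$, so your identity $J[\pi(\cdot)] = r_{\text{target}} - \E{N}$ coincides with the paper's series, and both arguments rest on the same Corollary. What your version buys is rigor at exactly the spot you flagged: in the paper's final bounding step the coefficients $r_{\text{target}} - t$ change sign for $t > r_{\text{target}}$, so replacing $\PP{D_t}$ by the upper bound $C\sigma^{t-1}$ does not by itself control the negative terms; what is really needed is absolute convergence of $\sum_t \lvert r_{\text{target}} - t \rvert \PP{D_t}$, and your Tonelli-plus-domination argument delivers this cleanly while also producing the interpretable byproduct that $J$ equals $r_{\text{target}}$ minus the mean first-passage time. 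What the paper's version buys is an explicit closed-form upper bound $\frac{C(r_{\text{target}} - 1)}{1 - \sigma} - \frac{C\sigma}{(1 - \sigma)^2}$ on the objective. Your closing observation on uniformity over admissible policies also matches the paper: the survival lemmas use only $\pi(k) \in \{-1, 0, 1\}$ and $w_k \geq w > 1$, not any particular policy.
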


\begin{proof}
Let $D_t$ denote the event that the agent perished exactly at step $t$ and let $D_\infty$ denote $\bigcap\limits_{t = 1}^\infty H_t$.
The sample space only comprises the latter events
\begin{equation}
\label{eqn:Complete}
\Omega =\bigcup\limits_{t = 1}^\infty D_t.
\end{equation}
Also,
\begin{equation}
\label{eqn:Separate}
\forall i, j \in \{\infty\}\cup\mathbb{N}\ : \ (i \neq j) \rightarrow D_i\cap D_j = \varnothing.
\end{equation}

Evidently, $D_t$ implies that the total reward is equal to exactly $r_{\text{target}} - t$ or, equivalently,
\begin{equation}
\label{eqn:DeathAndReward}
    \begin{array}{l}
        \forall t \in \mathbb{N} \ \forall \omega \in D_t \ :  \\
        \sum_{t = 0}^\infty r(S_t(\omega), \pi(S_t(\omega)), W_t(\omega)) = r_{\text{target}} - t.
    \end{array}
\end{equation}

Since $\forall t \in \mathbb{N} \ : \ D_\infty \subset H_t$ and $\PP(H_t) \leq C\sigma^t$, it is true that
\begin{equation}
\label{eqn:ImpossibleEvent}
    0 \leq \PP(D_\infty) \leq \inf\limits_t \PP(H_t) = \inf\limits_{t \geq 0} C\sigma^t = 0.
\end{equation}
In other words, $D_\infty$ is an impossible event.

Combining (\ref{eqn:Complete}), (\ref{eqn:Separate}), (\ref{eqn:DeathAndReward}) and (\ref{eqn:ImpossibleEvent}), we get the following representation of $R_{\text{tot}}$:
$$
R_{\text{tot}}(\omega) = \sum_{t = 1}^\infty (R_0 - t)I_{D_t}(\omega)
$$
whence
\begin{equation}
\label{eqn:ExpectedValue}
\begin{aligned}
& \E{R_{\text{tot}}} = \int\limits_\Omega R_{\text{tot}}(\omega) \PP{\mathrm d \omega} \\
& = \int\limits_\Omega \left(\sum_{t = 1}^\infty (r_{\text{target}} - t)I_{D_t}(\omega)\right) \PP{\mathrm d \omega} \\
& = \sum_{t = 1}^\infty (\zeta^* - t) \PP{D_t}.
\end{aligned}
\end{equation}

Obviously, perishing at exactly $t$ implies survival at $t - 1$, so
$$
D_t \subset S_{t - 1}
$$
whence
\begin{equation}
\label{eqn:DeathUpperBound}
    \PP{D_t} \leq \PP{H_{t - 1}}.
\end{equation}
Combining (\ref{eqn:ExpectedValue}) and (\ref{eqn:DeathUpperBound}) yields
$$
\exists \ C > 0 \ \exists \ 0 < \sigma < 1 \ : \ \E{R_{\text{tot}}} \leq \sum_{t = 1}^\infty (r_{\text{target}} - t)C\sigma^{t - 1}.
$$
Let us determine that the right-hand side of this relation converges to:
\begin{equation*}
	\begin{aligned}
		& \sum_{t = 1}^\infty (r_{\text{target}} - t)C\sigma^{t - 1} = \sum_{t = 0}^\infty (\zeta^* - 1 - t)C\sigma^t = \\
		& = C(r_{\text{target}} - 1)\sum_{t = 0}^\infty \sigma^t - C\sum_{t = 0}^\infty t\sigma^t \\
		& = \frac{C(r_{\text{target}} - 1)}{1 - \sigma} - \frac{C\sigma}{(1 - \sigma)^2}.
	\end{aligned}
\end{equation*}
Finally,
$$
\exists \ C > 0 \ \exists \ 0 < \sigma < 1 \ : \ \E{R_{\text{tot}}} \leq \frac{C(r_{\text{target}} - 1)}{1 - \sigma} - \frac{C\sigma}{(1 - \sigma)^2}
$$
which proves the claim.
\end{proof}

\section{More on algorithm convergence peculiarities in 2D heated gridworlds}\label{additional}

\vspace{-3mm}

\begin{figure}[H]
 \begin{subfigure}{\linewidth}
  \includegraphics[width=\linewidth]{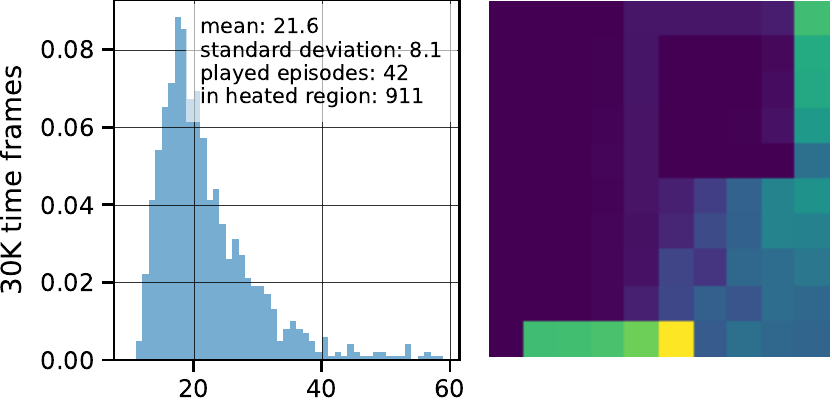}
 \end{subfigure}
 \begin{subfigure}{\linewidth}
  \includegraphics[width=\linewidth]{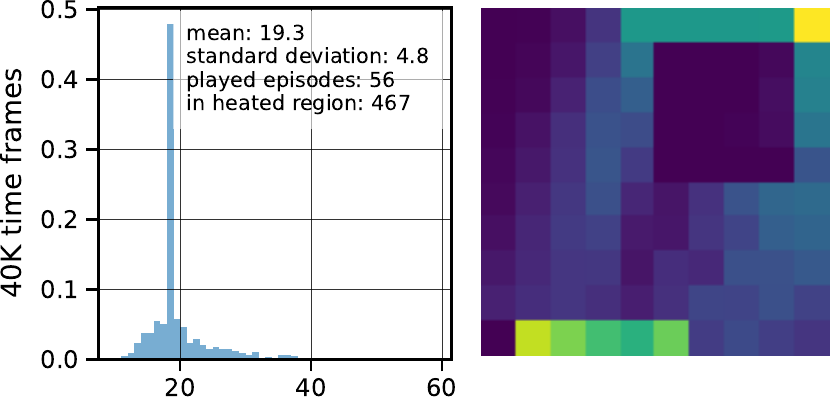}
 \end{subfigure}
 \begin{subfigure}{\linewidth}
  \includegraphics[width=\linewidth]{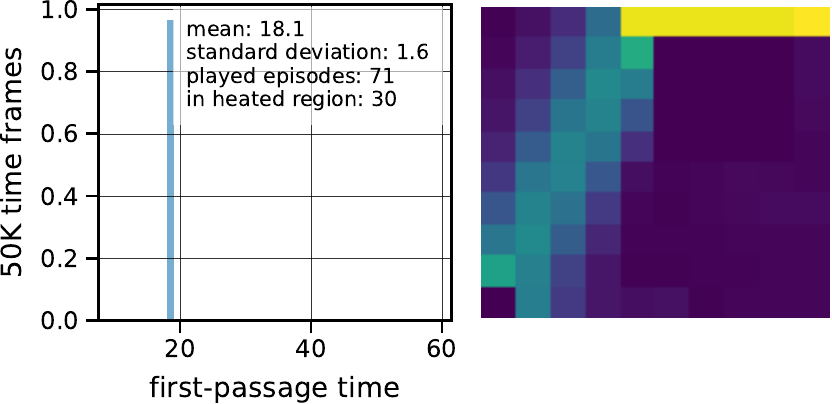}
 \end{subfigure}
 \caption{Transition from heated to deterministic route in the gridworld of Section \ref{sec:2d-sim} when $T=3$ and $\varepsilon=1.0$ through all course of learning (which in fact means offline learning). Q-learning, 1000 agents, $\alpha=0.1, \varepsilon=1.0, \gamma=0.9$, statistics is gathered after specified number of time frames for greedy behaviour.}
 \label{fig:policy_trend2}
\end{figure}

\vspace{-6mm}

\begin{figure}[H]
    \centering
    \includegraphics[width=0.95\linewidth]{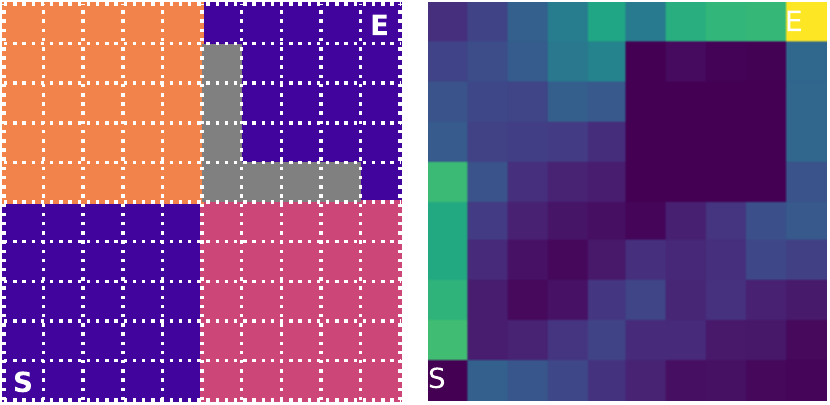}
    \caption{Path density in presence of two heated regions with temperature $T=1$ and $T=2$. 2D 10x10 gridworld, Q-learning, 500 agents, $\alpha=0.1, \varepsilon=0.1, \gamma=0.9$, learning time = 50K time frames. Agents are divided between two routes proportionally to temperature.}
    \label{fig:two_regions}
\end{figure}

\begin{figure}[H]
    \centering
    \includegraphics[width=\linewidth]{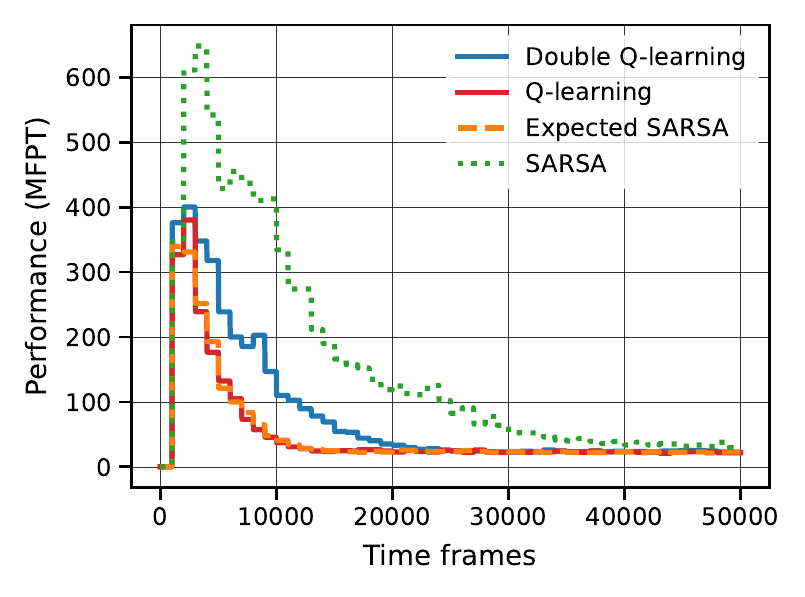}
    \caption{Performance comparison in presence of heated region with temperature $T=3$. 2D 10x10 gridworld, 100 agents, $\alpha=0.1, \varepsilon=0.1, \gamma=0.9$. Mean first-passage time after learning given number of time frames is a measure of performance (MFPT, smaller is better).}
    \label{fig:performance}
\end{figure}

\section*{Acknowledgment}

The authors acknowledge the use of computational resources of the Skoltech CDISE supercomputer Zhores for obtaining the results presented in this paper as well as the support of the Russian Science Foundation project no. 21-11-00363.

\bibliographystyle{IEEEtran}
\bibliography{references}

\begin{thebibliography}{10}
\providecommand{\url}[1]{#1}
\csname url@samestyle\endcsname
\providecommand{\newblock}{\relax}
\providecommand{\bibinfo}[2]{#2}
\providecommand{\BIBentrySTDinterwordspacing}{\spaceskip=0pt\relax}
\providecommand{\BIBentryALTinterwordstretchfactor}{4}
\providecommand{\BIBentryALTinterwordspacing}{\spaceskip=\fontdimen2\font plus
\BIBentryALTinterwordstretchfactor\fontdimen3\font minus
  \fontdimen4\font\relax}
\providecommand{\BIBforeignlanguage}[2]{{%
\expandafter\ifx\csname l@#1\endcsname\relax
\typeout{** WARNING: IEEEtran.bst: No hyphenation pattern has been}%
\typeout{** loaded for the language `#1'. Using the pattern for}%
\typeout{** the default language instead.}%
\else
\language=\csname l@#1\endcsname
\fi
#2}}
\providecommand{\BIBdecl}{\relax}
\BIBdecl

\bibitem{vision1}
H.~Rowley, S.~Baluja, and T.~Kanade, ``Neural network-based face detection,''
  \emph{IEEE Transactions on Pattern Analysis and Machine Intelligence},
  vol.~20, no.~1, pp. 23--38, 1998.

\bibitem{vision2}
P.~Viola and M.~Jones, ``Rapid object detection using a boosted cascade of
  simple features,'' in \emph{Proceedings of the 2001 IEEE Computer Society
  Conference on Computer Vision and Pattern Recognition. CVPR 2001}, vol.~1,
  2001, pp. I--I.

\bibitem{speech}
L.~Deng, G.~Hinton, and B.~Kingsbury, ``New types of deep neural network
  learning for speech recognition and related applications: an overview,'' in
  \emph{2013 IEEE International Conference on Acoustics, Speech and Signal
  Processing}, 2013, pp. 8599--8603.

\bibitem{atari}
V.~Mnih, K.~Kavukcuoglu, D.~Silver, A.~Graves, I.~Antonoglou, D.~Wierstra, and
  M.~Riedmiller, ``Playing atari with deep reinforcement learning,''
  \emph{ArXiv preprint}, p. arXiv:1312.5602, 2013.

\bibitem{Go}
D.~Silver, J.~Schrittwieser, K.~Simonyan, I.~Antonoglou, A.~Huang, A.~Guez,
  T.~Hubert, L.~Baker, M.~Lai, A.~Bolton, and et~al., ``Mastering the game of
  go without human knowledge,'' \emph{Nature}, vol. 550, no. 7676, pp.
  354--359, 2017.

\bibitem{NIPS2020VRP}
\BIBentryALTinterwordspacing
A.~Delarue, R.~Anderson, and C.~Tjandraatmadja, ``Reinforcement learning with
  combinatorial actions: An application to vehicle routing,'' in \emph{Advances
  in Neural Information Processing Systems}, H.~Larochelle, M.~Ranzato,
  R.~Hadsell, M.~F. Balcan, and H.~Lin, Eds., vol.~33.\hskip 1em plus 0.5em
  minus 0.4em\relax Curran Associates, Inc., 2020, pp. 609--620. [Online].
  Available:
  \url{https://proceedings.neurips.cc/paper/2020/file/06a9d51e04213572ef0720dd27a84792-Paper.pdf}
\BIBentrySTDinterwordspacing

\bibitem{VRPReview}
Y.~Bengio, A.~Lodi, and A.~Prouvost, ``Machine learning for combinatorial
  optimization: a methodological tour d’horizon,'' \emph{European Journal of
  Operational Research}, vol. 290, no.~2, pp. 405--421, 2020.

\bibitem{LenkaReview2019}
G.~Carleo, I.~Cirac, K.~Cranmer, L.~Daudet, M.~Schuld, N.~Tishby,
  L.~Vogt-Maranto, and L.~Zdeborov\'{a}, ``Machine learning and the physical
  sciences,'' \emph{Review of Modern Physics}, vol.~91, p. 045002, 2019.

\bibitem{carleo2019review}
\BIBentryALTinterwordspacing
G.~Carleo, I.~Cirac, K.~Cranmer, L.~Daudet, M.~Schuld, N.~Tishby,
  L.~Vogt-Maranto, and L.~Zdeborov\'a, ``Machine learning and the physical
  sciences,'' \emph{Rev. Mod. Phys.}, vol.~91, p. 045002, Dec 2019. [Online].
  Available: \url{https://link.aps.org/doi/10.1103/RevModPhys.91.045002}
\BIBentrySTDinterwordspacing

\bibitem{cichos2020review}
F.~Cichos, K.~Gustavsson, B.~Mehlig, and G.~Volpe, ``Machine learning for
  active matter,'' \emph{Nature Machine Intelligence}, vol.~2, no.~2, pp.
  94--103, 2020.

\bibitem{hydrodynamics}
S.~L. Brunton, B.~R. Noack, and P.~Koumoutsakos, ``Machine learning for fluid
  mechanics,'' \emph{Annu. Rev. Fluid Mech.}, vol.~52, pp. 477--508, 2020.

\bibitem{hasselt2010double}
H.~Hasselt, ``Double q-learning,'' \emph{Advances in neural information
  processing systems}, vol.~23, pp. 2613--2621, 2010.

\bibitem{sutton2018}
R.~S. Sutton and A.~G. Barto, \emph{Reinforcement learning: An
  introduction}.\hskip 1em plus 0.5em minus 0.4em\relax MIT press, 2018.

\bibitem{modelsactivematter}
M.~Shaebani, A.~Wysocki, R.~Winkler, and et~al., ``Computational models for
  active matter,'' \emph{Nat Rev Phys}, vol.~21, pp. 181--199, 2020.

\bibitem{biferale2019zermelo}
L.~Biferale, F.~Bonaccorso, M.~Buzzicotti, P.~Clark Di~Leoni, and
  K.~Gustavsson, ``Zermelo’s problem: Optimal point-to-point navigation in 2d
  turbulent flows using reinforcement learning,'' \emph{Chaos: An
  Interdisciplinary Journal of Nonlinear Science}, vol.~29, no.~10, p. 103138,
  2019.

\bibitem{colabrese2017flow}
S.~Colabrese, K.~Gustavsson, A.~Celani, and L.~Biferale, ``Flow navigation by
  smart microswimmers via reinforcement learning,'' \emph{Physical review
  letters}, vol. 118, no.~15, p. 158004, 2017.

\bibitem{biferale2018PRLiquids}
------, ``Smart inertial particles,'' \emph{Phys. Rev. Fluids}, vol.~3, p.
  084301, 2018.

\bibitem{collectiveswimming2018}
S.~Verma, G.~Novati, and P.~Koumoutsakos, ``Efficient collective swimming by
  harnessing vortices through deep reinforcement learning,'' \emph{Proceedings
  of the National Academy of Sciences}, vol. 115, no.~23, pp. 5849--5854, 2018.

\bibitem{muinos2021reinforcement}
S.~Mui{\~n}os-Landin, A.~Fischer, V.~Holubec, and F.~Cichos, ``Reinforcement
  learning with artificial microswimmers,'' \emph{Science Robotics}, vol.~6,
  no.~52, 2021.

\bibitem{KUKREJA2021}
\BIBentryALTinterwordspacing
N.~Kukreja and V.~K. Sharma, ``Optimal path planning of mobile nanobots
  navigation control in human physiological systems using advanced soft
  computing technique,'' \emph{Materials Today: Proceedings}, 2021. [Online].
  Available:
  \url{https://www.sciencedirect.com/science/article/pii/S2214785320406480}
\BIBentrySTDinterwordspacing

\bibitem{ocean2016}
B.~Yoo and J.~Kim, ``Path optimization for marine vehicles in ocean currents
  using reinforcement learning,'' \emph{Journal of Marine Science and
  Technology}, vol.~21, pp. 334--343, 2016.

\bibitem{reddy2016learning}
G.~Reddy, A.~Celani, T.~J. Sejnowski, and M.~Vergassola, ``Learning to soar in
  turbulent environments,'' \emph{Proceedings of the National Academy of
  Sciences}, vol. 113, no.~33, pp. E4877--E4884, 2016.

\bibitem{reddy2018glider}
G.~Reddy, J.~Wong-Ng, A.~Celani, T.~J. Sejnowski, and M.~Vergassola, ``Glider
  soaring via reinforcement learning in the field,'' \emph{Nature}, vol. 562,
  no. 7726, pp. 236--239, 2018.

\bibitem{hung2016q}
S.-M. Hung and S.~N. Givigi, ``A q-learning approach to flocking with uavs in a
  stochastic environment,'' \emph{IEEE transactions on cybernetics}, vol.~47,
  no.~1, pp. 186--197, 2016.

\bibitem{yang2020efficient}
Y.~Yang, M.~A. Bevan, and B.~Li, ``Efficient navigation of colloidal robots in
  an unknown environment via deep reinforcement learning,'' \emph{Advanced
  Intelligent Systems}, vol.~2, no.~1, p. 1900106, 2020.

\bibitem{financial}
\BIBentryALTinterwordspacing
T.~L. Meng and M.~Khushi, ``Reinforcement learning in financial markets,''
  \emph{Data}, vol.~4, no.~3, 2019. [Online]. Available:
  \url{https://www.mdpi.com/2306-5729/4/3/110}
\BIBentrySTDinterwordspacing

\bibitem{PENDHARKAR20181}
\BIBentryALTinterwordspacing
P.~C. Pendharkar and P.~Cusatis, ``Trading financial indices with reinforcement
  learning agents,'' \emph{Expert Systems with Applications}, vol. 103, pp.
  1--13, 2018. [Online]. Available:
  \url{https://www.sciencedirect.com/science/article/pii/S0957417418301209}
\BIBentrySTDinterwordspacing

\bibitem{hu2018accurate}
Z.~Hu, Y.~Jiang, X.~Ling, and Q.~Liu, ``Accurate q-learning,'' in
  \emph{International Conference on Neural Information Processing}.\hskip 1em
  plus 0.5em minus 0.4em\relax Springer, 2018, pp. 560--570.

\bibitem{he2019interleaved}
M.~He and H.~Guo, ``Interleaved q-learning with partially coupled training
  process,'' in \emph{Proceedings of the 18th International Conference on
  Autonomous Agents and MultiAgent Systems}, 2019, pp. 449--457.

\bibitem{}
R.~Fox, A.~Pakman, and N.~Tishby, ``Taming the noise in reinforcement learning
  via soft updates,'' \emph{Proceedengs of UIAI}, 2016.

\bibitem{perturbed2020}
\BIBentryALTinterwordspacing
J.~Wang, Y.~Liu, and B.~Li, ``Reinforcement learning with perturbed rewards,''
  in \emph{Proceedings of the AAAI Conference on Artificial Intelligence},
  vol.~04.\hskip 1em plus 0.5em minus 0.4em\relax AAAI Press, Palo Alto,
  California USA, 2020, pp. 6202--6209. [Online]. Available:
  \url{https://doi.org/10.1609/aaai.v34i04.6086}
\BIBentrySTDinterwordspacing

\bibitem{everitt2017corruptedreward}
\BIBentryALTinterwordspacing
T.~Everitt, V.~Krakovna, L.~Orseau, and S.~Legg, ``Reinforcement learning with
  a corrupted reward channel,'' in \emph{IJCAI}, 2017, pp. 4705--4713.
  [Online]. Available: \url{https://doi.org/10.24963/ijcai.2017/656}
\BIBentrySTDinterwordspacing

\bibitem{loftin2014humanfeedback}
R.~Loftin, B.~Peng, J.~MacGlashan, M.~L. Littman, M.~E. Taylor, J.~Huang, and
  D.~L. Roberts, ``Learning something from nothing: Leveraging implicit human
  feedback strategies,'' in \emph{The 23rd IEEE international symposium on
  robot and human interactive communication}.\hskip 1em plus 0.5em minus
  0.4em\relax IEEE, 2014, pp. 607--612.

\bibitem{schoettler2020insertiontask}
G.~Schoettler, A.~Nair, J.~Luo, S.~Bahl, J.~Aparicio~Ojea, E.~Solowjow, and
  S.~Levine, ``Deep reinforcement learning for industrial insertion tasks with
  visual inputs and natural rewards,'' in \emph{2020 IEEE/RSJ International
  Conference on Intelligent Robots and Systems (IROS)}, 2020, pp. 5548--5555.

\bibitem{gullapalli1994peginhole}
V.~Gullapalli, J.~A. Franklin, and H.~Benbrahim, ``Acquiring robot skills via
  reinforcement learning,'' \emph{IEEE Control Systems Magazine}, vol.~14,
  no.~1, pp. 13--24, 1994.

\bibitem{johannink2019residual}
T.~Johannink, S.~Bahl, A.~Nair, J.~Luo, A.~Kumar, M.~Loskyll, J.~A. Ojea,
  E.~Solowjow, and S.~Levine, ``Residual reinforcement learning for robot
  control,'' in \emph{2019 International Conference on Robotics and Automation
  (ICRA)}.\hskip 1em plus 0.5em minus 0.4em\relax IEEE, 2019, pp. 6023--6029.

\bibitem{howell1997vehiclesuspension}
M.~N. Howell, G.~P. Frost, T.~J. Gordon, and Q.~H. Wu, ``Continuous action
  reinforcement learning applied to vehicle suspension control,''
  \emph{Mechatronics}, vol.~7, no.~3, pp. 263--276, 1997.

\bibitem{Huang2017AdversarialAO}
S.~H. Huang, N.~Papernot, I.~Goodfellow, Y.~Duan, and P.~Abbeel, ``Adversarial
  attacks on neural network policies,'' \emph{ArXiv}, vol. abs/1702.02284,
  2017.

\bibitem{Huang2019DeceptiveRL}
Y.~Huang and Q.~Zhu, ``Deceptive reinforcement learning under adversarial
  manipulations on cost signals,'' in \emph{GameSec}, 2019.

\bibitem{Gleave2020AdversarialPA}
A.~Gleave, M.~Dennis, N.~Kant, C.~Wild, S.~Levine, and S.~J. Russell,
  ``Adversarial policies: Attacking deep reinforcement learning,''
  \emph{ArXiv}, vol. abs/1905.10615, 2020.

\bibitem{baird1994reinforcement}
L.~C. Baird, ``Reinforcement learning in continuous time: Advantage updating,''
  in \emph{Proceedings of 1994 IEEE International Conference on Neural Networks
  (ICNN'94)}, vol.~4.\hskip 1em plus 0.5em minus 0.4em\relax IEEE, 1994, pp.
  2448--2453.

\bibitem{fox2015taming}
\BIBentryALTinterwordspacing
R.~Fox, A.~Pakman, and N.~Tishby, ``Taming the noise in reinforcement learning
  via soft updates,'' in \emph{Proceedings of conference on uncertainty in
  artificial intelligence (UAI)}, A.~Ihler and D.~Janzing, Eds., vol.~32.\hskip
  1em plus 0.5em minus 0.4em\relax AUAI Press, 2016, pp. 202--211. [Online].
  Available: \url{https://auai.org/uai2016/proceedings/papers/219.pdf}
\BIBentrySTDinterwordspacing

\bibitem{Qlearning}
C.~Watkins, ``Learning from delayed rewards,'' Ph.D. dissertation, King’s
  College, Cambridge, 1989.

\bibitem{redner01}
S.~Redner, \emph{A guide to first-passage processes}.\hskip 1em plus 0.5em
  minus 0.4em\relax Cambridge: Cambridge University Press, 2001.

\bibitem{Grebenkov_2020}
\BIBentryALTinterwordspacing
D.~S. Grebenkov, D.~Holcman, and R.~Metzler, ``Preface: new trends in
  first-passage methods and applications in the life sciences and
  engineering,'' \emph{Journal of Physics A: Mathematical and Theoretical},
  vol.~53, no.~19, p. 190301, apr 2020. [Online]. Available:
  \url{https://doi.org/10.1088/1751-8121/ab81d5}
\BIBentrySTDinterwordspacing

\bibitem{viswanathan}
G.~M. Viswanathan, M.~G.~E. da~Luz, E.~P. Raposo, and H.~E. Stanley, \emph{The
  Physics of Foraging: An Introduction to Random Searches and Biological
  Encounters}.\hskip 1em plus 0.5em minus 0.4em\relax Cambridge University
  Press, 2011.

\bibitem{Mirny_2009}
\BIBentryALTinterwordspacing
L.~Mirny, M.~Slutsky, Z.~Wunderlich, A.~Tafvizi, J.~Leith, and A.~Kosmrlj,
  ``How a protein searches for its site on {DNA}: the mechanism of facilitated
  diffusion,'' \emph{Journal of Physics A: Mathematical and Theoretical},
  vol.~42, no.~43, p. 434013, oct 2009. [Online]. Available:
  \url{https://doi.org/10.1088/1751-8113/42/43/434013}
\BIBentrySTDinterwordspacing

\bibitem{risk}
\BIBentryALTinterwordspacing
J.~Masoliver and J.~Perell\'o, ``First-passage and risk evaluation under
  stochastic volatility,'' \emph{Phys. Rev. E}, vol.~80, p. 016108, Jul 2009.
  [Online]. Available:
  \url{https://link.aps.org/doi/10.1103/PhysRevE.80.016108}
\BIBentrySTDinterwordspacing

\bibitem{stoshasticgames}
\BIBentryALTinterwordspacing
C.-Y. Wei, Y.-T. Hong, and C.-J. Lu, ``Online reinforcement learning in
  stochastic games,'' in \emph{Advances in Neural Information Processing
  Systems}, I.~Guyon, U.~V. Luxburg, S.~Bengio, H.~Wallach, R.~Fergus,
  S.~Vishwanathan, and R.~Garnett, Eds., vol.~30.\hskip 1em plus 0.5em minus
  0.4em\relax Curran Associates, Inc., 2017. [Online]. Available:
  \url{https://proceedings.neurips.cc/paper/2017/file/36e729ec173b94133d8fa552e4029f8b-Paper.pdf}
\BIBentrySTDinterwordspacing

\bibitem{reachability}
S.~Debnath, L.~Liu, and G.~Sukhatme, ``Solving markov decision processes with
  reachability characterization from mean first passage times,'' in \emph{2018
  IEEE/RSJ International Conference on Intelligent Robots and Systems (IROS)},
  2018, pp. 7063--7070.

\bibitem{Palyulin_2012}
\BIBentryALTinterwordspacing
V.~V. Palyulin and R.~Metzler, ``How a finite potential barrier decreases the
  mean first-passage time,'' \emph{Journal of Statistical Mechanics: Theory and
  Experiment}, vol. 2012, no.~03, p. L03001, mar 2012. [Online]. Available:
  \url{https://doi.org/10.1088/1742-5468/2012/03/l03001}
\BIBentrySTDinterwordspacing

\bibitem{Chupeau1383}
\BIBentryALTinterwordspacing
M.~Chupeau, J.~Gladrow, A.~Chepelianskii, U.~F. Keyser, and E.~Trizac,
  ``Optimizing brownian escape rates by potential shaping,'' \emph{Proceedings
  of the National Academy of Sciences}, vol. 117, no.~3, pp. 1383--1388, 2020.
  [Online]. Available: \url{https://www.pnas.org/content/117/3/1383}
\BIBentrySTDinterwordspacing

\bibitem{bertsekas1996neuro}
D.~P. Bertsekas and J.~N. Tsitsiklis, \emph{Neuro-dynamic programming}.\hskip
  1em plus 0.5em minus 0.4em\relax Athena Scientific, 1996.

\bibitem{bellman1957dynamic}
\BIBentryALTinterwordspacing
R.~BELLMAN, ``A markovian decision process,'' \emph{Journal of Mathematics and
  Mechanics}, vol.~6, no.~5, pp. 679--684, 1957. [Online]. Available:
  \url{http://www.jstor.org/stable/24900506}
\BIBentrySTDinterwordspacing

\bibitem{watkins1992q}
C.~J. Watkins and P.~Dayan, ``Q-learning,'' \emph{Machine learning}, vol.~8,
  no. 3-4, pp. 279--292, 1992.

\bibitem{rummery1994sarsa}
G.~A. Rummery and M.~Niranjan, \emph{On-line Q-learning using connectionist
  systems}.\hskip 1em plus 0.5em minus 0.4em\relax University of Cambridge,
  Department of Engineering Cambridge, UK, 1994, vol.~37.

\bibitem{john1994expected_sarsa}
G.~H. John, ``When the best move isn't optimal: Q-learning with exploration,''
  in \emph{AAAI}, vol. 1464.\hskip 1em plus 0.5em minus 0.4em\relax Citeseer,
  1994.

\end{thebibliography}

\end{document}